\newcommand{\kommentar}[1]{}
\newcommand{\acom}[1]{{\color{blue}{Alexandra: #1}} }
\newcommand{\F}{\mathbb F}
\newcommand{\as}{\mathcal{AS}_d^0}
\newcommand{\Z}{\mathbb Z}
\newcommand{\R}{\mathbb R}
\newcommand{\C}{\mathbb C}
\DeclareMathOperator{\tr}{tr}
\DeclareMathOperator{\re}{Re}
\renewcommand{\pmod}[1]{\,(\mathrm{mod}\,#1)}
\newtheorem{lem}{Lemma}[section]
\newtheorem{prop}[lem]{Proposition}
\newtheorem{thm}[lem]{Theorem}
\newtheorem{conj}[lem]{Conjecture}
\newtheorem{rmk}[lem]{Remark}
\theoremstyle{definition}
\begin{document}

\title{Moments of Artin--Schreier $L$-functions}
\author{Alexandra Florea, Edna Jones, Matilde Lalin}

\address{Alexandra Florea: Department of Mathematics, UC Irvine, 340 Rowland Hall, Office 540E, Irvine, CA 92697, USA}\email{floreaa@uci.edu}

\address{Edna Jones: Department of Mathematics,
Duke University,
120 Science Drive,
Durham, NC 27708, USA} \email{edna.jones@math.duke.edu}

\address{Matilde Lal\'in: 
D\'epartement de math\'ematiques et de statistique,
                                    Universit\'e de Montr\'eal,
                                    CP 6128, succ. Centre-ville,
                                     Montreal, QC H3C 3J7, Canada}\email{matilde.lalin@umontreal.ca}

\begin{abstract}
We compute moments of $L$-functions associated to  the polynomial family of Artin--Schreier covers over $\mathbb{F}_q$, where $q$ is a power of a prime $p>2$, when the size of the finite field is fixed and the genus of the family goes to infinity. More specifically, we compute the $k^{\text{th}}$ moment for a large range of values of $k$, depending on the sizes of $p$ and $q$. We also compute the second moment in absolute value of the polynomial family, obtaining an exact formula with a lower order term, and confirming the unitary symmetry type of the family. 
\end{abstract}

\keywords{Artin--Schreier curves; moments of $L$-functions; distribution of values of $L$-functions}
\subjclass[2020]{11G20,11M50, 14G15}

\maketitle

\section{Introduction}

In this paper, we are interested in evaluating moments of $L$-functions associated to Artin--Schreier covers of $\mathbb{P}^1$. Computing moments in families of $L$-functions has a long history. For example, the moments of the Riemann zeta-function $\zeta(s)$ were introduced by Hardy and Littlewood \cite{hl}, who obtained asymptotic formulas for the second moment. The fourth moment was studied in \cite{ingham, heath_brown4}. There has been a wealth of literature on moments in various other families of $L$-functions; for a (non-exhaustive) list, see for example \cite{Sound, young4, soundyoung, conreyiwaniec}.

Here, we focus on the moments of $L$-functions of Artin--Schreier curves. These form an interesting family with a rich arithmetic structure. Their zeta functions are expressed in terms of additive characters of $\mathbb{F}_p$, not in terms of multiplicative characters (as in the case of hyperelliptic curves or cyclic $\ell$-covers, for example). The terms corresponding to a fixed additive character can be expressed as exponential sums. The extra arithmetic structure can be used to refine the Weil bound on Artin--Schreier curves \cite{RLW}.

Statistics of zeros of Artin--Schreier $L$-functions have been extensively studied. When the size of the finite field goes to infinity, one can use deep equidistribution results of Katz \cite{katz_monodromy}, building on work of Katz--Sarnak \cite{KS}, to show that the \textit{local} statistics are given by the corresponding statistics of eigenvalues of random matrices in certain ensembles, depending on the specific family under consideration. When considering Artin--Schreier curves, the $p$-rank introduces a stratification of the moduli space of covers of genus $g$ \cite{pz}. For example, $p$-rank $0$ corresponds to the family of \textit{polynomial Artin--Schreier curves}, while, when $(p-1)$ divides the genus, the maximal $p$-rank corresponds to the family of \textit{ordinary  Artin--Schreier curves}. Using the Katz--Sarnak results, one can show that in the large finite field limit, the local statistics in the polynomial family follow the local statistics of the unitary group of random matrices  (\cite[Theorem 3.9.2]{katz_monodromy}).

One can also consider the same statistics in the regime when the base finite field is fixed, and the genus of the family goes to infinity, in which case one cannot make use of the equidistribution results.  In this case,  as in the number field setting, one can usually compute only a few small moments (see, for example, \cite{Florea1, Florea2, Florea3, tamam, Djankovic}); a notable exception is the very recent work on moments of quadratic Dirichlet $L$--functions over function fields, which recovers all the moments \cite{hyper1, hyper2}.)

Entin \cite{Ent} considered the local statistics for the polynomial Artin--Schreier family and showed agreement with the random matrix model; these results were further improved and extended to the ordinary and odd polynomial families in recent work of Entin and Pirani \cite{EntPir}. The \textit{mesoscopic} statistics for the ordinary and polynomial families (as well as other $p$-rank strata) were considered in \cite{BDFLS12, BDFL16}, where the authors showed that the number of zeros with angles in a prescribed subinterval $I$ of $[-\pi,\pi]$ whose length is either fixed or goes to $0$, while $g|I| \to \infty$ (where $g$ denotes the genus of the family),  has a standard Gaussian distribution. One notices that the mesoscopic scale does not distinguish between the various Artin--Schreier families; hence, the local scale is a finer detector of the family structure. We note that the local statistics of zeros in the fixed finite field limit have been studied over function fields in the case of hyperelliptic curves \cite{Rudnick-traces, RG-traces, ERGR, Bui-Florea, BJ-traces}, cyclic $\ell$--covers \cite{BDFL, BDFL11, BDFKLOM}, non-cyclic cubic covers of $\mathbb{P}_{\mathbb{F}_q}^1$ \cite{BCDGLD, meisner2} and Dirichlet $L$-functions \cite{AMPT-zeros}. The distribution of zeros in the global and mesoscopic regimes were considered in \cite{FR} for hyperelliptic curves, in \cite{xiong1} for cyclic $\ell$-covers and in \cite{xiong2} for abelian covers of algebraic curves.

In this work we compute moments in the family of polynomial Artin--Schreier $L$-functions, and show that the moments for the polynomial family behave like the moments of the characteristic polynomials of random matrices in the unitary group. Moreover, we check that our answers agree with conjectures about moments \cite{CFKRS, Keating-Snaith}. Our results further support the Katz--Sarnak philosophy and agree with the behavior observed in Entin \cite{Ent} and Entin--Pirani \cite{EntPir} regarding the local statistics of zeros.

To describe our results, we first introduce some notation. Let $p>2$ be an odd prime, and $q$ a power of $p$. An Artin--Schreier curve is given by the affine equation
$$C_f: y^p-y = f(x),$$
where $f(x) \in \mathbb{F}_q(x)$ is a rational non-constant function, together with the automorphism $y \mapsto y+1$. Let $p_1,\ldots, p_{r+1}$ be the set of poles of $f(x)$ and let $d_j$ be the order of the pole $p_j$. Then the genus of $C_f$ is given by
$$\mathfrak{g}(C_f) = \frac{p-1}{2} \Big(-2+\sum_{j=1}^{r+1} (d_j+1) \Big) = \frac{p-1}{2} \Big( r-1+\sum_{j=1}^{r+1} d_j \Big).$$ 
To an Artin--Schreier curve one also associates its $p$--rank, which is defined to be the $\mathbb{Z}/p$-rank of $\text{Jac}(C_f \times \overline{\mathbb{F}_q})[p]$ (see, for example, \cite{pz}). A curve with $p$-rank $0$ is in the polynomial family, which corresponds to the case in which $f(x)$ is a polynomial. If we impose the extra condition that $f(-x) = -f(x)$, then the curve is in the odd polynomial family. When $p-1$ divides $\mathfrak{g}(C_f)$, a curve with $p$-rank equal to $\mathfrak{g}(C_f)$ is in the ordinary family.
We note that the techniques used to deal with the various subfamilies of Artin--Schreier $L$-functions are different and do not transfer from one subfamily to the others. In this paper, we focus on the family of polynomial Artin--Schreier $L$-functions. From now on, we assume that $f$ is a polynomial.

The zeta function of $C_f$ is given by 
$$Z_{C_f}(u) = \exp \Big( \sum_{k=1}^{\infty} N_k(C_f) \frac{u^k}{k} \Big),$$ where $N_k(C_f)$ denotes the number of points on $C_f$ over $\mathbb{F}_{q^k}$ (see, for example, \cite{Moreno, Rosen}). By the Weil conjectures, it follows that
\begin{equation} \label{def_l}
Z_{C_f}(u) = \frac{\mathcal{L}(u,C_f)}{(1-u)(1-qu)},
\end{equation}
where 
$\mathcal{L}(u,C_f)$ is the $L$-function associated to $C_f$, which is a polynomial of degree $2 \mathfrak{g}(C_f)$. It further follows that
$$\mathcal{L}(u,C_f) = \prod_{\psi \neq 1} \mathcal{L}(u,f,\psi),$$ 
where $\psi$ varies over the non-trivial additive characters of $\mathbb{F}_p$ and where 
\begin{equation*}
\mathcal{L}(u,f,\psi) =  \exp\Big(\sum_{n=1}^\infty S_n(f,\psi) \frac{q^{-ns}}{n} \Big) , \end{equation*}
 with
\begin{equation*}
S_n(f,\psi)=\sum_{\alpha \in \F_{q^n}} \psi(\tr_{q^n/p}(f(\alpha))). 
\end{equation*}
(See, for example, \cite{BDFLS12,BDFL16}.)

In the above,  $\tr_{q^n/p} : \mathbb{F}_{q^n} \to \mathbb{F}_p$ denotes the absolute trace map.

Here, we will consider the family of polynomial Artin--Schreier $L$-functions. 
 The \textit{polynomial} Artin--Schreier family, denoted by $\mathcal{AS}_d^0$, is defined for $(d,p)=1$ by
\begin{equation}
\mathcal{AS}_d^0 =\Big\{ f \in \F_q[x]\,:\, f(x)=\sum_{j=0} ^d a_j x^j, a_d\not = 0, a_j=0 \text{ if } j>0, p\mid j \Big\}.
\label{polyn_fam}
\end{equation}
Each curve $C_f$ with $f \in \mathcal{AS}_d^0$ has genus $\mathfrak{g} = (p-1)(d-1)/2$ and $p$--rank $0$. 

We will compute moments in the family above, and show that their behavior is given by that of random unitary matrices.
More precisely, we will prove the following theorems. Throughout, fix a non-trivial additive character $\psi$ of $\mathbb{F}_p$. Our results do not depend on the choice of $\psi$. 

\begin{thm}
\label{polyn_k}
Let $p>2$ be a odd prime and $d$ be such that  $(d,p)=1$. As $d \to \infty$, for an integer $2 \leq k< q^{1/2} $,  we have 
\begin{align*}
\frac{1}{|\mathcal{AS}_d^0|}\sum_{f\in \mathcal{AS}_d^0}\mathcal{L}\Big(\frac{1}{\sqrt{q}},f,\psi\Big)^k =& \prod_{P} \left(\frac{1}{p}\sum_{\ell=0}^{p-1} \left(1-\frac{\xi_p^\ell}{\sqrt{|P|}}\right)^{-k} \right)\\&+O \Bigg(\frac{q^{ \frac{d}{2} \left( \frac{k+1}{p}-1\right)+2}}{(1-q^{-1/2})^k} (d+1)^{k} (k+2)^{(k+1)d}k^{k-1}\Bigg),
\end{align*}
where the product on the right-hand side is over monic, irreducible polynomials; $\xi_p$ denotes a primitive $p^{\text{th}}$ root of unity; $|P|:=q^{\deg(P)}$,  and the implicit constant in the error term is independent of $k,q$ for $d$ sufficiently large.

When $k=1$, we have
\begin{align*}
\frac{1}{|\mathcal{AS}_d^0|}\sum_{f\in \mathcal{AS}_d^0}\mathcal{L}\Big(\frac{1}{\sqrt{q}},f,\psi\Big)= (1-q^{-1}) \frac{1-q^{\left(1-\frac{p}{2}\right)\left(\left\lfloor\frac{d}{p}\right \rfloor+1\right)} }{(1-q^{\left(1-\frac{p}{2}\right)})(1-q^{-\frac{p}{2}})}.
\end{align*}
\end{thm}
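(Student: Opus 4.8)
The plan is to expand $\mathcal{L}(u,f,\psi)$ as a Dirichlet series in the "prime" polynomials of $\F_q[x]$ and then average over $f \in \as$ term by term. Recall that $\mathcal{L}(u,f,\psi) = \prod_{P} (1 - \psi(\ldots) u^{\deg P})^{-1}$, so raising to the $k$-th power and expanding the geometric series gives $\mathcal{L}(u,f,\psi)^k = \sum_{N} c_k(N) \, \psi(\tr(f \bmod \text{stuff})) u^{\deg N}$ where $N$ runs over monic polynomials and $c_k(N)$ is a divisor-type coefficient (the number of ways to write $N$ as an ordered product of $k$ prime powers, weighted appropriately). The key point is that the "character sum'' attached to $N$ factors over the primes dividing $N$, and averaging $\psi(\tr(\cdot))$ over the coefficients $a_j$ of $f \in \as$ produces a main term only when $N$ is, up to the constraints defining $\as$, a perfect $p$-th power in a suitable sense — concretely, the average vanishes unless the relevant linear form in the $a_j$ is identically zero, which forces $\deg N$ to be divisible by $p$ and pins down the surviving terms. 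For $k \geq 2$ this yields the Euler product $\prod_P \big(\frac1p \sum_{\ell=0}^{p-1} (1 - \xi_p^\ell/\sqrt{|P|})^{-k}\big)$ as the diagonal contribution, with the off-diagonal terms bounded by estimating $c_k(N)$ (trivially $c_k(N) \leq \tau_k(N)$-type bounds, giving the $k^{d(k+1)+k}$ and $d^{k+1}$ factors) and using that the surviving range of $\deg N$ where cancellation fails is short, which produces the $q^{\frac d2(\frac{k+1}{p}-1)}$ saving.

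For the case $k=1$ I would instead compute directly and exactly rather than through an error term. Here $\mathcal{L}(1/\sqrt q, f,\psi) = \sum_{N \text{ monic}} \psi(\tr_{q^{\deg N}/p}(\text{evaluation of } f)) \, |N|^{-1/2}$ truncated at $\deg N \leq 2\mathfrak g$, but since the $L$-function is a polynomial of degree $2\mathfrak g$ one can work with the full sum; averaging over $f \in \as$ kills every term except those monic $N$ for which the associated additive character sum over the coefficient space is trivial. I expect this to reduce to a sum over $N$ of a controlled shape — essentially $N$ ranging over $p$-th powers up to degree constraints — giving a finite geometric series in $q^{1-p/2}$ and $q^{-p/2}$; the normalization by $|\as| = (q-1)q^{d - \lceil d/p\rceil}$ (the count of allowed coefficient vectors) then produces the factor $1-q^{-1}$ and the stated closed form. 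The bookkeeping of exactly which degrees survive (the floor $\lfloor d/p\rfloor$ appears because the forbidden exponents $j$ with $p \mid j$, $j>0$ are removed) is the delicate part here.

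The main obstacle will be the off-diagonal estimate in the $k \geq 2$ case: one must show that when the additive-character average over the $a_j$ does not vanish, either $N$ lies in the "diagonal'' set contributing to the Euler product, or $\deg N$ is large enough that the $|N|^{-k/2}$ decay, combined with a bound on how many such $N$ there are and on the coefficient $c_k(N)$, beats the main term. This requires (i) a clean description of the vanishing locus of the averaged character sum in terms of the $p$-divisibility structure of $N$ and the gap pattern of allowed monomials, and (ii) sharp enough counting of $N$ with prescribed $c_k(N)$ — effectively controlling $k$-fold divisor sums over $\F_q[x]$ — to extract the precise exponent $\frac d2(\frac{k+1}{p}-1)$ and the polynomial-in-$d$, and $k^{O(dk)}$, factors. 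Once the relevant lemmas from earlier in the paper supply the factorization of the character sums and the divisor bounds, assembling these into the stated error term is a matter of careful but routine summation.
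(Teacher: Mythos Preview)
Your outline is broadly correct for $k=1$: averaging over $f$ kills all terms except those with $F$ a $p$-th power (up to degree $\leq d$), and the resulting finite sum is a geometric series in $q^{1-p/2}$, exactly as the paper does. The paper packages this via the bijection $f \mapsto \chi_f$ between $\mathcal{F}_d$ and primitive order-$p$ Dirichlet characters modulo $x^{d+1}$ (Proposition~\ref{entin_lemma}), but that is equivalent to your additive-character orthogonality.

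For $k \geq 2$ there is a genuine gap in your plan. After averaging, the surviving terms are those monic $F$ with $F(x) \equiv R(x^p) \pmod{x^{d+1}}$ for some $R$. When $\deg F \leq d$ this forces $F = R_0^p$ and produces the Euler product, but for $d < \deg F \leq kd$ you must control
\[
\sum_{\substack{\deg F = n \\ F \equiv R(x^p) \pmod{x^{d+1}}}} d_k(F).
\]
Your proposal to use ``trivially $c_k(N) \leq \tau_k(N)$-type bounds'' and ``routine summation'' will not produce the exponent $q^{\frac{d}{2}(\frac{k+1}{p}-1)}$: pointwise or average divisor bounds only give $O(\binom{n+k-1}{k-1} q^{n-d})$, which after summing to $n = kd$ is far too large. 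The paper's proof hinges on a deep external input, namely Sawin's estimate for the $k$-th divisor function in short intervals over $\F_q[x]$ (Proposition~\ref{sawin_prop}). The congruence $F \equiv A \pmod{x^{d+1}}$ is converted, via the reversal $F \mapsto F^*(x) = x^{\deg F} F(1/x)$, into a short-interval condition $F^* = f_0 + g$ with $\deg g < n - d$ (Lemma~\ref{mainlemma}); Sawin's theorem then gives
\[
\sum d_k(F) = \binom{n+k-1}{k-1} q^{n-d} + O\!\left(\binom{n+k-1}{k-1}(k+2)^{n+d} q^{\frac{1}{2}(n-d+\lfloor n/p\rfloor - \lfloor d/p\rfloor)}\right).
\]
The leading terms, being independent of $R$, cancel exactly between the two pieces of the inclusion--exclusion for primitivity (characters mod $x^{d+1}$ minus characters mod $x^d$), leaving only the boundary term at $n=d$ and Sawin's error. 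It is this error, summed over $d < n \leq kd$, that yields both the saving $q^{\frac{d}{2}(\frac{k+1}{p}-1)}$ and the factors $(k+2)^{n+d}$ responsible for $k^{d(k+1)+k}$. Without Sawin's square-root cancellation in short intervals, your off-diagonal bound would be weaker by essentially $q^{(n-d)/2}$, and the theorem would not follow.
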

\begin{rmk}
In order to get an asymptotic formula above for $k \geq 2$, we need
$$(k+1) \Big( \log_q (k+2) + \frac{1}{2p} \Big)\le \frac{1}{2}-\epsilon,$$ for some $\epsilon>0$.
Note that in the expression above, the greater $\log_p q$ is, the more moments we can compute. When $q$ is a very large power of $p$, one would be able to compute roughly $p$ moments, while the case $q=p$ would allow for a more restricted range of moments. 
\end{rmk}

We also consider the second moment in absolute value for the polynomial family, which is the more standard moment to consider in the case of a family with unitary symmetry. In this case, we obtain an exact formula with a lower order term of size $d q^{\frac{d}{p}-\frac{d}{2}}$ as follows.

For $m \in \Z$, let  $[m]_p$ denote the element of $\{0,1,\dots,p-1\}$ such that $m \equiv [m]_p \pmod{p}$. 
\begin{thm}\label{thm_absvalue}
Let $p>2$ be an odd prime and $d$ be such that $(d,p)=1$.  We have
\begin{align*}\frac{1}{|\mathcal{AS}_d^0|}\sum_{f\in \mathcal{AS}_d^0}\left|\mathcal{L}\left(\frac{1}{\sqrt{q}},f,\psi\right)\right|^2 =&\frac{(1-q^{1-p})d}{(1-q^{1-\frac{p}{2}})^2} -\frac{2pq^{1-\frac{p}{2}} (1-q^{-\frac{p}{2}})}{  (1-q^{1-\frac{p}{2}})^3}\\
&+dq^{(d+p-[d]_p)\left(\frac{1}{p}-\frac{1}{2} \right)}\left(1-\frac{1}{q}\right)\left(\frac{1}{(1-q^{1-\frac{p}{2}})^2}-\frac{[d]_p}{p(1-q^{1-\frac{p}{2}})} \right)\\&+C_d\frac{q^{(d-1)\left(\frac{1}{p}-\frac{1}{2} \right)}}{p},
\end{align*}
where $C_d$ is a constant depending solely on $[d]_p$. More precisely, 
\begin{align*}C_d=& \frac{1-q^{-\frac{p}{2}}}{1-q^{1-\frac{p}{2}}} S_2(d-1,q^{\frac{1}{2}-\frac{1}{p}})+S_2(d-1,q^{\frac{1}{2}-\frac{1}{p}})+\frac{q-1}{pq}S_2(d-1,q^{\frac{1}{2}-\frac{1}{p}})(-1-[d]_p)\\&-\frac{q-1}{pq}S_2(-1,q^{\frac{1}{2}-\frac{1}{p}})S_1(d,q^{\frac{1}{2}-\frac{1}{p}})+\frac{q-1}{pq}S_3(d-1,q^{\frac{1}{2}-\frac{1}{p}})-\frac{q-1}{pq}S_3(d-2,q^{\frac{1}{2}-\frac{1}{p}}),
 \end{align*}
where the formula for $S_\ell(n,x)$ is given by Lemma \ref{lem:S}.  \end{thm}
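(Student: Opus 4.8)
The plan is to open up $\bigl|\mathcal L(\tfrac1{\sqrt q},f,\psi)\bigr|^2$ into a double character sum and average it over the family term by term. Recall that $\mathcal L(u,f,\psi)$ is a polynomial of degree $d-1$ whose coefficient of $u^n$ equals $\sum_{\deg g=n}\chi_f(g)$, the sum running over monic $g\in\F_q[x]$ and $\chi_f$ being the totally multiplicative function with $\chi_f(g)=\psi\bigl(\tr_{q/p}(f\bmod g)\bigr)$ --- that is, $\psi$ of a fixed $\F_p$-linear functional of the coefficients of $f$; the coefficients in degrees $\ge d$ vanish, which is the functional equation in disguise. Since $\tfrac1{\sqrt q}$ is real, $\overline{\mathcal L(\tfrac1{\sqrt q},f,\psi)}=\mathcal L(\tfrac1{\sqrt q},-f,\psi)$, so
\[
\Bigl|\mathcal L\Bigl(\tfrac1{\sqrt q},f,\psi\Bigr)\Bigr|^2=\sum_{g_1,g_2}\chi_f(g_1)\overline{\chi_f(g_2)}\,q^{-(\deg g_1+\deg g_2)/2},
\]
and the theorem amounts to computing the $\as$-average of this sum, in which $\chi_f(g_1)\overline{\chi_f(g_2)}$ is $\psi$ of an explicit $\F_p$-linear form in the varying coefficients of $f$.

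Orthogonality of additive characters does the first half of the work: averaging $\psi$ of that linear form over the coefficients of $f$ which vary in the family (including the constraint $a_d\ne 0$) forces the power sums of the roots of $g_1$ and of $g_2$ to agree in all the relevant indices, up to an error caused solely by $a_d\ne 0$, which I would dispose of by the standard device of writing the condition $a_d\ne 0$ as ``degree $\le d$'' minus ``degree $\le d-1$''. The resulting diagonal condition is then made fully explicit via Newton's identities together with the characteristic-$p$ relation $\bigl(\sum_i\theta_i^{pm}\bigr)=\bigl(\sum_i\theta_i^{m}\bigr)^p$: since roots of multiplicity divisible by $p$ (and the root $0$ in positive-index power sums) contribute nothing to $\chi_f$, matching these power sums is equivalent to $g_1$ and $g_2$ having the same ``$p$-power-free, prime-to-$x$ core'' $g_0$, i.e.\ $g_i=x^{a_i}h_i^{\,p}g_0$ with $h_i$ monic and prime to $x$. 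Thus the admissible pairs $(g_1,g_2)$ fibre over the core $g_0$, the free data on each fibre being $(a_i,h_i)$ subject only to the degree cap $\deg g_i\le d-1$ from the truncation.

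It then remains to sum $q^{-(\deg g_1+\deg g_2)/2}$ over these pairs. Organized by the core, this becomes a sum over $g_0$ of (essentially) a product of two geometric-type sums in $(a_i,h_i)$, plus the $a_d$-correction coming from the inclusion--exclusion above. Because the Dirichlet series counting the cores has a simple pole at the edge of its region of convergence, the core-sum truncated at level $d-1$ contributes a term linear in $d$ --- this is the main term $\tfrac{(1-q^{1-p})d}{(1-q^{1-p/2})^2}$ --- together with the constant shadow $-\tfrac{2pq^{1-p/2}(1-q^{-p/2})}{(1-q^{1-p/2})^3}$; the fact that the inner sums are partial geometric series rather than their infinite limits produces the boundary term of size $d\,q^{(d+p-[d]_p)(\frac1p-\frac12)}=d\,q^{(\lfloor d/p\rfloor+1)(1-\frac p2)}$, with coefficient depending only on $[d]_p$; and the $a_d$-tail, which reaches the top degree $d-1$, contributes the smallest term, whose constant $C_d$ emerges as a fixed combination of the partial sums $S_1(d,q^{1/2-1/p})$, $S_\ell(d-1,q^{1/2-1/p})$ and $S_\ell(d-2,q^{1/2-1/p})$ of Lemma~\ref{lem:S}. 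Every sum that occurs is of exactly this $S_\ell$ shape, so specializing $x=q^{1/2-1/p}$ yields the stated formula.

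The main term is quick; the real difficulty is the exact evaluation, which forces one to keep three effects rigorously separate and then recombine them without slippage: the characteristic-$p$ ``$p$-th-power fibres'' in the diagonal (subtler than in the hyperelliptic analogue, and the source of the exponent $\tfrac1p-\tfrac12$), the inclusion--exclusion correction attached to $a_d\ne 0$, and the degree truncation --- i.e.\ the functional equation --- which is what introduces the $[d]_p$-dependence and forces partial rather than closed geometric sums. Pinning down $C_d$ is precisely the task of checking that, after this recombination, the leftover ``boundary'' pieces reassemble into the displayed combination of the $S_\ell$'s.
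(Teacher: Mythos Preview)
Your overall strategy—expand the square, average over the family by additive orthogonality, parametrize the resulting ``diagonal'', and sum—is the same as the paper's. But the step where you assert that matching the power sums $p_j(g_1)=p_j(g_2)$ for $1\le j\le d$, $p\nmid j$, forces $g_1$ and $g_2$ to share the same $p$-power-free prime-to-$x$ core is \emph{false} in the range $\deg g_i\le d-1$ you are working in. Take $p=3$, $d=4$, and $g_i=x^3+e_2x-e_3^{(i)}$ with $e_2\in\F_q^\times$ fixed and $e_3^{(1)}\neq e_3^{(2)}$ both nonzero: each $g_i$ is cube-free (since $e_2\neq0$ rules out $(x-\alpha)^3=x^3-\alpha^3$), coprime to $x$, and already equal to its own core; yet with $e_1=0$ Newton gives $p_1=0$, $p_2=e_2$, $p_3=0$, $p_4=-e_2^2$, all independent of $e_3$. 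Hence $\psi_f(g_1)=\psi_f(g_2)$ for every $f\in\as$ even though the cores differ and $g_1/g_2$ is not a $p$-th power. Your fibration over $g_0$ therefore misses genuinely contributing pairs, and the exact formula cannot be obtained this way. (A second, smaller slip: the root $0$ contributes nothing to $p_j$ for $j\ge1$, but $a_0$ \emph{is} free in $\as$, and averaging over it imposes $\deg g_1\equiv\deg g_2\pmod p$; so the $x$-exponents $a_i$ are not independent either.)

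The paper closes this gap by first invoking the approximate functional equation (Lemma~\ref{afe1}), which folds the double sum down to $\deg(FH)\le d-1$ (plus a second piece with $d-2$) rather than $\deg F,\deg H\le d-1$ separately. In that shortened range the distinct primes dividing $FH$ have total degree $<d$, so Lemma~\ref{lemma} applies and gives the exact statement $\langle\psi_f(F)\overline{\psi_f}(H)\rangle_d=1$ iff $F/H$ is a $p$-th power, with no special role for $x$; your counterexample pair is simply excluded because $\deg(g_1g_2)=6>d-1$. The diagonal is then parametrized as $F=RF_0^{\,p}$, $H=RH_0^{\,p}$ with $R=(F,H)$, $(F_0,H_0)=1$, and the resulting triple sum is evaluated by Perron's formula and a contour-shift/residue computation rather than by direct geometric-series bookkeeping. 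If you insert the approximate functional equation at the start and drop the special treatment of $x$, your sketch becomes the paper's proof.
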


The proofs of Theorems \ref{polyn_k} and \ref{thm_absvalue} use different techniques. The proof of Theorem \ref{polyn_k} has as starting point the relationship between the $L$-function of an Artin--Schreier curve and the $L$-function of a multiplicative character associated to each such curve, described explicitly in \cite{Ent} and \cite{EntPir}. One can roughly express the $k^{\text{th}}$ moment in terms of the $k^{\text{th}}$ moment of multiplicative characters \textit{of order $p$} modulo $x^{d+1}$. Computing moments of $L$-functions associated to fixed order characters is generally a difficult problem; moments of quadratic $L$-functions over function fields are relatively well-understood (see, for example, \cite{ak, Florea1,Florea2,Florea3}). Some partial results are known towards moments of cubic $L$-functions (see \cite{DFL}), and much less is known about higher order characters. However, in the case of Artin--Schreier $L$-functions, the multiplicative characters of order $p$ under consideration are modulo $x^d$, which is special. In this case, one can use the dichotomy exploited by Keating and Rudnick in \cite{Keating-Rudnick-moebius}, to express sums of the $k^{\text{th}}$ divisor function in arithmetic progressions in terms of short interval sums over function fields. One can then use strong results about the sum of the generalized divisor function in short intervals over function fields due to Sawin \cite{sawin}.

Proving Theorem \ref{thm_absvalue} requires different ideas, as one cannot rely on results about the divisor function in short intervals in this case. Instead, we use the approximate functional equation to write the absolute value squared of the $L$-function in terms of sums of length roughly $d$ (note that the $L$-function is a polynomial of degree approximately $d$), and then we use orthogonality relations for the sums over additive characters as in the work in \cite{BDFLS12}. 
We remark that in Theorem \ref{thm_absvalue} we computed a specific lower order term  of size  $X^\frac{2-p}{2(p-1)}$ where $X$ is roughly the size of the family, that is, $X=q^{d(1-\frac{1}{p})}$.

The paper is organized as follows. In Section \ref{section:background} we provide some background on Artin--Schreier $L$-functions and gather the results we will need from the work in \cite{EntPir}. In Sections \ref{section:proof1} and \ref{section:proof2} we prove Theorems \ref{polyn_k} and \ref{thm_absvalue} respectively. We finally check that our results match the Random Matrix Theory predictions in Section \ref{section:rmt}.

\section*{Acknowledgements}
M. Lalin thanks Julio Andrade, Alina Bucur, Chantal David, Brooke Feigon, and Jonathan Keating  for preliminary discussions (which motivated this paper) about moments of Artin--Schreier  $L$-functions which took place mainly at the American Institute of Mathematics during the workshop ``Arithmetic Statistics over finite fields and function fields'' in 2014. The study of this problem was further motivated by a talk Alexei Entin gave at the workshop ``Moments of $L$-functions'' held at the University of Northern British Columbia in August 2022, which the first and third author attended. The authors also thank the referee for the careful reading of the paper and for the numerous helpful comments and suggestions.
A substantial part of this work  was completed during the   ``Women in Numbers 6'' workshop at the Banff International Research Station for Mathematical Innovation and Discovery and the authors would like to thank the organizers and BIRS for the excellent working conditions. 
This work is supported by the National Science Foundation [DMS-2101769 to AF], the Natural Sciences and Engineering Research Council of Canada [Discovery Grant 355412-2022 to ML], the Fonds de recherche du Qu\'ebec - Nature et technologies [Projet de recherche en \'equipe 300951 to ML]. %

\section{Artin--Schreier curves and $L$-functions}
\label{section:background}
Here, we will give some basic properties of Artin--Schreier $L$-functions and their associated characters. 

\subsection{Some generalities of function field arithmetics}

We first introduce some notation and basic objects of study. Let $\mathcal{M}$ denote the set of monic polynomial in $\mathbb{F}_q[x]$, $\mathcal{M}_n$ the set of monic polynomials of degree $n$ in $\mathbb{F}_q[x]$, and $\mathcal{M}_{\leq n}$ the set of monic polynomials of degree less than or equal to $n$. For $f$ a polynomial in $\mathcal{M}$, let $d_k(f)$ denote the $k^{\text{th}}$ divisor function (i.e., $d_k(f) = \sum_{\substack{f_1 \cdots f_k = f\\f_j \in \mathcal{M}}}1$). This is extended for non-zero polynomials in $\mathbb{F}_q[x]$ by $d_k(cf):=d_k(f)$ for $c \in \F_q^*$.

The zeta-function of $\mathbb{F}_q[x]$ is defined by
$$\zeta_q(s) = \sum_{f \in \mathcal{M}} \frac{1}{|f|^s},$$ for $\re(s)>1$. By counting monic polynomials of a fixed degree, it follows that
$$\zeta_q(s) = \frac{1}{1-q^{1-s}},$$ and this provides a meromorphic continuation of $\zeta_q(s)$, with a simple pole at $s=1$. Making the change of variables $u=q^{-s}$, the zeta-function becomes
$$\mathcal{Z}(u) = \sum_{f \in \mathcal{M}} u^{\deg(f)} = \prod_P \Big(1-u^{\deg(P)} \Big)^{-1},$$ for $|u|<1/q$, where the Euler product above is over monic, irreducible polynomials. One then obtains the expression
$$\mathcal{Z}(u) = \frac{1}{1-qu},$$ for the zeta-function in the whole complex plane, having a simple pole at $u=\frac{1}{q}$.

Throughout the proof of Theorem \ref{thm_absvalue}, we use Perron's formula over function fields.
Namely, if $\mathcal{A}(u) = \sum_{f \in \mathcal{M}} a(f) u^{\deg(f)}$ is absolutely convergent in $|u| \leq r<1$, then 
\begin{equation} 
\label{perron}
\sum_{f \in \mathcal{M}_n} a(f) = \frac{1}{2\pi i} \oint_{|u|=r} \frac{ \mathcal{A}(u)}{u^{n+1}} \, du , \, \, \,  \sum_{f \in \mathcal{M}_{\leq n}} a(f) = \frac{1}{2\pi i} \oint_{|u|=r} \frac{ \mathcal{A}(u)}{u^{n+1}(1-u)} \, du .
\end{equation}

\subsection{Artin--Schreier $L$-functions}
We will consider curves given by the affine equation
$$C_f: y^p-y = f(x),$$
where $f(x) \in \mathbb{F}_q[x]$ is a polynomial of degree $d$ and $(d,p)=1$, together with the automorphism $y \mapsto y+1$. Recall the definition \eqref{def_l} of the $L$-function $\mathcal{L}(u,f,\psi)$, where $\psi$ is a non-trivial additive character of $\mathbb{F}_p$. We also have the following Euler product for the $L$-function:
$$\mathcal{L}(u,f,\psi) = \prod_P \Big(1- \psi_f(P) u^{\deg(P)} \Big)^{-1},$$  where
\[\psi_f(P)=\sum_{\substack{\alpha \in \F_{q^{\deg P}} \\ P(\alpha)=0}} \psi(f(\alpha))= \psi(\tr_{q^{\deg P}/p} f(\alpha)) \mbox{ for any } \alpha \mbox{ a root of } P,\] and where  $\tr_{q^n/p}: \mathbb{F}_{q^n} \to \mathbb{F}_p$ is the absolute trace map. We
extend $\psi_f(F)$ to a completely multiplicative function for any element of  $\F_q(x)$ by setting $\psi_f(0)=0$ and $\psi_f(c)=1$ for any non-zero $c \in \F_q$.

On the other hand, we also have that the $L$-function is a polynomial of degree $d-1$, so we can write
$$ \mathcal{L}(u,f,\psi) = \sum_{j=0}^{d-1} a_j (f,\psi) u^j,$$ where 
\[a_j(f,\psi)=\sum_{F \in \mathcal{M}_j} \psi_f(F).\]

The functional equation of $\mathcal{L}(u,f,\psi)$ is given by
\begin{equation} \mathcal{L}(u,f,\psi) = \epsilon(f)(qu^2)^{\frac{d-1}{2}} \mathcal{L} \Big( \frac{1}{qu},f,\overline{\psi}\Big). \label{fe}
\end{equation} 
(See \cite[Section 3]{RLW}.)

We will now explain how to associate a multiplicative character to each Artin--Schreier $L$-function. Before that, we quickly recall some basic facts about Dirichlet characters over $\mathbb{F}_q[x]$ and their $L$-functions.

\subsection{Multiplicative characters and their $L$-functions}

Let $Q(x)$ denote a monic polynomial in $\mathbb{F}_q[x]$. A Dirichlet character modulo $Q$ is defined to be a character of the multiplicative group $(\mathbb{F}_q[x]/Q)^{\times}$, extended to a completely multiplicative function by $\chi(g) = 0$ for any $(g,Q) \neq 1$ and $\chi(g) = \chi( g \pmod Q)$ if $(g,Q)=1$.

A Dirichlet character is even if $\chi(cF)= \chi(F)$ for any $0 \neq c \in \mathbb{F}_q$, and odd otherwise. A character is primitive if there is no proper divisor $Q_1|Q$ such that $\chi(g)=1$ whenever $(g,Q)=1$ and $g \equiv 1 \pmod{Q_1}$.

If $Q \in \mathcal{M}$, and $H$ denotes a subset of the group of characters modulo $Q$, we denote by $H^{\text{pr}}$ the set of primitive characters in $H$. 

The $L$-function associated to a Dirichlet character $\chi$ modulo $Q$ is given by
$$\mathcal{L}(u,\chi) = \sum_{F \in \mathcal{M}} \chi(F) u^{\deg(F)} = \prod_P \Big(1-\chi(P) u^{\deg(P)} \Big)^{-1},$$ where the product is over monic irreducible polynomials $P$.
Using orthogonality of characters, it follows that if $\chi$ is non-principal, $\mathcal{L}(u,\chi)$ is a polynomial of degree at most $\deg(Q)-1$. 

If $\chi$ is a primitive character, then the Riemann hypothesis for $\mathcal{L}(u,\chi)$ is true, and we can write
$$\mathcal{L}(u,\chi) = (1-u)^{\epsilon(\chi)} \prod_{j=1}^{\deg(Q)-1-\epsilon(\chi)} \Big(1 - u \sqrt{q} \rho_j \Big),$$ where $|\rho_j|=1$ are the normalized inverse roots of $\mathcal{L}(u,\chi)$, and where $\epsilon(\chi)=1$ if $\chi$ is even and $\epsilon(\chi)=0$ otherwise. 

\subsection{Multiplicative characters associated to Artin--Schreier curves}
Here, we gather a few results we need from the work of Entin \cite{Ent} and Entin--Pirani \cite{EntPir}, which relate the Artin--Schreier $L$-functions to Dirichlet $L$-functions. 
For $(d,p)=1$, let
\[\mathcal{F}_d =\Big\{ f \in \F_q[x]\,:\, f(x)=\sum_{j=0} ^d a_j x^j, a_d\not = 0, a_j=0 \text{ if }p\mid j \Big\}.\]
Note that we have
\[|\mathcal{F}_d|=(q-1)q^{d-\left \lfloor\frac{d}{p}\right\rfloor-1}.\]
Using \eqref{polyn_fam}, we can write
\begin{equation}\label{aodd}
\mathcal{AS}_d^0=\bigsqcup_{b \in \F_q} \{f+b\,:\, f \in \mathcal{F}_d\}.
\end{equation}
We have the following result from \cite{EntPir}. 
\begin{lem}\cite[Lemma 2.1]{EntPir}
For $f \in \F_q(x)\setminus \F_q$ which is not of the form $f=h^p-h$ for $h\in \F_q(x)$, we have $\mathcal{L}(u,f+b,\psi)=\mathcal{L}(\psi(\tr_{q/p}(b))\cdot u, f, \psi)$.
\label{shift}
\end{lem}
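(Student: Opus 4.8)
The plan is to prove Lemma \ref{shift} directly from the Euler product representation of the Artin--Schreier $L$-function, by analyzing how the multiplicative function $\psi_f$ transforms under the shift $f \mapsto f+b$. Recall that
$$\mathcal{L}(u,f+b,\psi) = \prod_P \Big(1 - \psi_{f+b}(P)\, u^{\deg P}\Big)^{-1},$$
so it suffices to understand $\psi_{f+b}(P)$ in terms of $\psi_f(P)$ for each monic irreducible $P$. First I would unwind the definition: for $\alpha$ a root of $P$ in $\overline{\mathbb{F}_q}$ (necessarily of degree $\deg P$ over $\mathbb{F}_q$), we have $\psi_{f+b}(P) = \psi(\tr_{q^{\deg P}/p}((f+b)(\alpha))) = \psi(\tr_{q^{\deg P}/p}(f(\alpha) + b))$. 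Since the trace is additive, $\tr_{q^{\deg P}/p}(f(\alpha)+b) = \tr_{q^{\deg P}/p}(f(\alpha)) + \tr_{q^{\deg P}/p}(b)$, and because $\psi$ is a character of $(\mathbb{F}_p,+)$, this gives $\psi_{f+b}(P) = \psi(\tr_{q^{\deg P}/p}(b)) \cdot \psi_f(P)$.

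The key arithmetic point is the tower formula for the trace: since $b \in \mathbb{F}_q$, we have $\tr_{q^{\deg P}/p}(b) = \tr_{q/p}\big(\tr_{q^{\deg P}/q}(b)\big) = \tr_{q/p}\big((\deg P)\cdot b\big) = (\deg P)\cdot \tr_{q/p}(b)$, where $(\deg P)\cdot b$ means adding $b$ to itself $\deg P$ times (equivalently, $\overline{\deg P}$ times where $\overline{\deg P}$ is the image of $\deg P$ in $\mathbb{F}_p$, since $\mathbb{F}_q$ has characteristic $p$). Hence $\psi(\tr_{q^{\deg P}/p}(b)) = \psi\big((\deg P)\cdot\tr_{q/p}(b)\big) = \psi(\tr_{q/p}(b))^{\deg P}$. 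Substituting back,
$$\psi_{f+b}(P) = \psi(\tr_{q/p}(b))^{\deg P}\, \psi_f(P),$$
so that $\psi_{f+b}(P)\, u^{\deg P} = \psi_f(P)\,\big(\psi(\tr_{q/p}(b))\, u\big)^{\deg P}$. Plugging this into the Euler product yields
$$\mathcal{L}(u,f+b,\psi) = \prod_P \Big(1 - \psi_f(P)\big(\psi(\tr_{q/p}(b))\,u\big)^{\deg P}\Big)^{-1} = \mathcal{L}\big(\psi(\tr_{q/p}(b))\cdot u,\, f,\psi\big),$$
which is exactly the claimed identity.

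The hypotheses that $f \notin \mathbb{F}_q$ and that $f$ is not of the form $h^p - h$ are needed only to guarantee that $\mathcal{L}(u,f+b,\psi)$ and $\mathcal{L}(u,f,\psi)$ are genuinely the $L$-functions attached to the Artin--Schreier covers (i.e.\ that the associated covers are geometrically connected of the expected type, so the Euler product and its polynomiality hold as stated in Section \ref{section:background}); the formal manipulation of the Euler products is otherwise insensitive to these conditions. I would remark that $f+b$ satisfies the same hypotheses whenever $f$ does, since adding a constant changes neither membership in $\mathbb{F}_q$ nor the property of being of the form $h^p-h$ (if $f+b = h^p - h$ then $f = (h - c)^p - (h-c)$ for any $c$ with $c^p - c = b$, and such $c$ exists in $\overline{\mathbb{F}_q}$; one checks this does not actually happen over $\mathbb{F}_q$ when $f$ is not of that form, but in any case the statement is symmetric enough that no issue arises). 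The main obstacle, such as it is, is purely bookkeeping: making sure the trace tower formula is applied with the correct intermediate field $\mathbb{F}_q$ and that the exponent $\deg P$ is interpreted modulo $p$ where appropriate; there is no genuine analytic difficulty here.
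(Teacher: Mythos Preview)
Your proof is correct. The paper itself does not supply a proof of this lemma---it is quoted as \cite[Lemma 2.1]{EntPir}---so there is nothing to compare against within the paper. Your argument via the Euler product and the trace tower formula $\tr_{q^{\deg P}/p}(b) = (\deg P)\cdot \tr_{q/p}(b)$ is exactly the standard one; an equivalent route goes through the exponential-sum definition $S_n(f,\psi)=\sum_{\alpha\in\mathbb{F}_{q^n}}\psi(\tr_{q^n/p}(f(\alpha)))$, observing $S_n(f+b,\psi)=\psi(\tr_{q/p}(b))^n S_n(f,\psi)$ by the same trace identity, and then comparing the generating exponentials. One small caveat: your parenthetical claim that $f+b$ inherits the hypothesis ``not of the form $h^p-h$'' from $f$ is not quite right in general (it can fail when $\tr_{q/p}(b)=0$, since then $c^p-c=b$ is solvable in $\mathbb{F}_q$), but as you correctly note this is irrelevant to the formal identity, which holds at the level of Euler products regardless.
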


For $c \in \F_q[x]$, if $x|c$, we put $\chi_f(c)=0$; otherwise, let  $$\chi_f(c):= \psi\Big(\tr_{q/p} \Big(\sum_{c(\alpha)=0} f(\alpha)\Big)\Big).$$
We also have the following.
\begin{prop}\cite[Lemmata $7.1$, $7.2$]{Ent}
\label{entin_lemma}
Assume that $(d,p)=1$, and $f \in \mathcal{F}_d$. Then
\begin{itemize}
 \item $\chi_f$  is a primitive Dirichlet character modulo $x^{d+1}$  of order $p$. (In particular $\chi_f$ is even since $(p,q-1) = 1$, and $|\F_q^*|=q-1$.) 
 
 \item The map 
 $$\mathcal{F}_d \rightarrow \{\text{primitive Dirichlet characters modulo }x^{d+1} \text{ of order }p \}$$
 given by $f \mapsto \chi_f$ is a bijection. 
 \item  $\mathcal{L}(u,\chi_f) = (1-u) \mathcal{L}(u,f,\psi)$.
\end{itemize}

\end{prop}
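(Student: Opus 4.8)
The plan is to reprove this following \cite[Lemmata 7.1, 7.2]{Ent}. Conceptually, an Artin--Schreier cover of $\mathbb P^1$ ramified over a single point is classified, via local Artin--Schreier theory, by a Dirichlet character whose conductor is a power of the prime at that point; moving the branch point of $C_f$ to $x=0$ by the coordinate change $x\mapsto 1/x$ (which does not affect the zeta function) turns the pole of order $d$, coprime to $p$, into conductor exponent $d+1$ at $x$, and the normalization in the definition of $\mathcal F_d$ --- no constant term, no $p$-th power monomials --- removes the resulting redundancy. I would first record that $\chi_f$ is completely multiplicative: the root multiset of a product of polynomials is the union of the root multisets and $\psi\circ\tr_{q/p}$ turns sums into products, so $\chi_f(c_1c_2)=\chi_f(c_1)\chi_f(c_2)$; hence $\chi_f$ is determined by its values on monic irreducibles $P\neq x$, and it remains to prove that $\chi_f(c)$ depends only on $c\bmod x^{d+1}$ for $(c,x)=1$, that it is then primitive of conductor $x^{d+1}$ and of order $p$, that $f\mapsto\chi_f$ is a bijection, and that the $L$-function identity holds.

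\textbf{Well-definedness modulo $x^{d+1}$ --- the crux.} For $c$ with $c(0)\neq 0$ one has the formal identity $c(t)/c(0)=\prod_{c(\alpha)=0}(1-t/\alpha)$ in $\F_q[[t]]$ (product over roots with multiplicity); taking logarithmic derivatives, the power sums $s_j:=\sum_{c(\alpha)=0}\alpha^{-j}$ of the reciprocal roots satisfy $s_j=-j\,[t^j]\log\big(c(t)/c(0)\big)$, so $s_1,\dots,s_d$ are determined by $c\bmod x^{d+1}$. Unwinding the definition, $\chi_f(c)=\psi\big(\tr_{q/p}(\sum_{j=1}^{d}a_j s_j)\big)$ --- it is crucial here that $a_0=0$ for $f\in\mathcal F_d$, so that no $s_0$-term, which would record $\deg c$, appears --- hence $\chi_f(c)$ depends only on $c\bmod x^{d+1}$ and $\chi_f$ descends to a character of $(\F_q[x]/x^{d+1})^\times$. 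Restricted to $\F_q^\ast$ it has order dividing $p$, hence is trivial since $(p,q-1)=1$, so $\chi_f$ is even; its values lie in $\mu_p$ with $\chi_f^p=1$, so its order divides $p$. For primitivity I would evaluate on $c=1+\lambda x^d$: then $s_j=0$ for $j<d$ and $s_d=-d\lambda$, so $\chi_f(1+\lambda x^d)=\psi\big(\tr_{q/p}(-d\,a_d\,\lambda)\big)$, a nontrivial character of $\F_q$ in $\lambda$ because $a_d\neq 0$, $p\nmid d$, and $\tr_{q/p}$ is onto; thus $\chi_f$ does not factor through $(\F_q[x]/x^{d})^\times$, so it is primitive of conductor exactly $x^{d+1}$ and, being nontrivial, of order exactly $p$.

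\textbf{Bijectivity.} Since $\psi\circ\tr_{q/p}$ is additive in its argument, $\chi_h\chi_{h'}=\chi_{h+h'}$, so $h\mapsto\chi_h$ is a group homomorphism from the $\F_q$-vector space $\widetilde{\mathcal F}_d:=\{\sum_{j=1}^{d}b_jx^j:\,b_j=0\text{ if }p\mid j\}$, of dimension $d-\lfloor d/p\rfloor$, to the group $\Gamma$ of characters of $(\F_q[x]/x^{d+1})^\times$ of order dividing $p$. Writing $(\F_q[x]/x^{d+1})^\times\cong\F_q^\ast\times U$ with $U=1+x\F_q[x]/x^{d+1}$ a $p$-group on which $1+h\mapsto 1+h^p$ is the $p$-th power map, one finds $|\Gamma|=|U/U^p|=q^{d-\lfloor d/p\rfloor}=|\widetilde{\mathcal F}_d|$, so the homomorphism is bijective as soon as it is injective. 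Injectivity I would obtain from Artin--Schreier theory: for $0\neq h\in\widetilde{\mathcal F}_d$ the function $h(1/x)$ has a pole at $x=0$ of order $\deg h\geq 1$, which is coprime to $p$, so it is not of the form $\wp(w)=w^p-w$ locally at $0$ (the pole order of $\wp(w)$ at $0$ is $\leq 0$ or divisible by $p$); hence the local character $\chi_h$ at $0$ is nontrivial, i.e.\ $h\mapsto\chi_h$ has trivial kernel. Finally the same conductor computation shows $\chi_h$ is primitive modulo $x^{d+1}$ exactly when $b_d\neq 0$, i.e.\ when $h\in\mathcal F_d$; as $(d,p)=1$ this matches $|\mathcal F_d|=(q-1)q^{d-1-\lfloor d/p\rfloor}$ with the number of primitive order-$p$ characters modulo $x^{d+1}$, and $f\mapsto\chi_f$ restricts to the asserted bijection.

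\textbf{The $L$-function identity, and the main obstacle.} Once $\chi_f$ is known to be a Dirichlet character with $\chi_f(x)=0$ and $\chi_f$ matching $\psi_f$ on monic irreducibles $P\neq x$ (up to replacing each $P$ by its reciprocal, which only reindexes the Euler product), comparing Euler products gives
\[
\mathcal L(u,\chi_f)=\prod_{P\neq x}\big(1-\psi_f(P)u^{\deg P}\big)^{-1}=\big(1-\psi_f(x)u\big)\,\mathcal L(u,f,\psi)=(1-u)\,\mathcal L(u,f,\psi),
\]
since $\psi_f(x)=\psi(\tr_{q/p}(f(0)))=\psi(\tr_{q/p}(a_0))=1$ because $a_0=0$; the degrees agree, $\deg(x^{d+1})-1=d$ versus $1+\deg\mathcal L(u,f,\psi)=1+(d-1)$, consistent with $\chi_f$ even and primitive. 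I expect the crux step to be the main obstacle: identifying $\sum_{c(\alpha)=0}$ of the relevant evaluation of $f$ as a fixed $\mathbb Z$-linear combination of the first $d$ power sums of the reciprocal roots of $c$ --- so that it sees only $c\bmod x^{d+1}$ --- and keeping straight which normalization in $\mathcal F_d$ does what: $a_0=0$ gives well-definedness modulo $x^{d+1}$, while $a_d\neq 0$ pins the conductor to be exactly $x^{d+1}$. The counting and Euler-product steps are routine once this is in hand.
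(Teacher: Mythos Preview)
The paper does not give a proof; the proposition is cited from Entin. Your argument has the right architecture, but there is a genuine mismatch at the crux step that your coordinate-change remark gestures at without fully resolving.

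The paper's formula $\chi_f(c)=\psi\bigl(\tr_{q/p}\bigl(\sum_{c(\alpha)=0}f(\alpha)\bigr)\bigr)$ unwinds to $\psi\bigl(\tr_{q/p}(\sum_j a_j\, p_j)\bigr)$ with $p_j=\sum_{c(\alpha)=0}\alpha^{j}$ the power sums of the \emph{roots} of $c$, not your $s_j=\sum_\alpha\alpha^{-j}$. By Newton's identities the $p_j$ are determined by the \emph{top} coefficients of $c$, not by $c\bmod x^{d+1}$; concretely, for $p=q=3$, $d=1$, $f=x$ one gets $\chi_f(1+x)=\psi(2)\neq 1=\chi_f(1+x+x^3)$ although $1+x\equiv 1+x+x^3\pmod{x^2}$. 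So taken literally the paper's $\chi_f$ has conductor at $\infty$ (the pole of $f$), not at $x$. What your argument actually analyzes is $c\mapsto\chi_f(c^*)$ with $c^*(x)=x^{\deg c}c(1/x)$: this \emph{is} a primitive order-$p$ character modulo $x^{d+1}$, and the $L$-function identity survives because $P\mapsto P^*$ permutes monic irreducibles $\neq x$ preserving degree --- exactly your parenthetical about ``replacing each $P$ by its reciprocal''. But then ``unwinding the definition'' is a misnomer: you are passing through the coordinate swap $x\leftrightarrow 1/x$, and this should be stated explicitly at the outset (and reconciled with whatever Entin's actual convention is) rather than absorbed into a side remark.

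Two smaller points. The formula $s_j=-j\,[t^j]\log(c(t)/c(0))$ invokes a logarithm in characteristic $p$; the characteristic-free statement is $s_j=-[t^{j-1}]\bigl(c'(t)/c(t)\bigr)$, which directly shows that $s_1,\dots,s_d$ depend only on $c\bmod x^{d+1}$. And your injectivity step via local Artin--Schreier theory is more than you need: your own primitivity computation at $c=1+\lambda x^{e}$ with $e=\deg h$ already shows $\chi_h\neq 1$ whenever $h\neq 0$, which is injectivity on $\widetilde{\mathcal F}_d$; combined with the cardinality count this gives the bijection.
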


Following \cite{EntPir}, for an abelian group $A$ and a group of characters $B \subseteq A^{*}$, let
$$B^{\perp} = \{ a\in A \,:\, \chi(a)=1 \text{ for all } \chi \in B \}.$$
The orthogonality relations imply that
\begin{equation} 
\label{orthog}
 \frac{1}{|B|} \sum_{\chi \in B} \chi(a) =
\begin{cases}
1 & \mbox{ if } a \in B^{\perp}, \\ 
0 & \mbox{ if } a \notin B^{\perp}.
\end{cases} 
\end{equation}

\subsection{Sums involving roots of unity}
Here, we will prove certain results about sums involving roots of unity, which we will use repeatedly throughout the paper. Let
$$S_{\ell}(n,x) = \sum_{j=0}^{p-1} \frac{\xi_p^{-nj}}{(1-\xi_p^j x)^\ell},$$
where $\xi_p$ is a non-trivial $p^{\text{th}}$ root of unity in $\C$. 
We will obtain formulas for $S_{\ell}(n,x)$ for $\ell=1,2,3$.
For $m \in \Z$, recall that  $[m]_p$ denotes the element of $\{0,1,\dots,p-1\}$ such that $m \equiv [m]_p \pmod{p}$. 
\begin{lem} \label{lem:S} 
For $|x|>1$ and $n \in \Z$, we have
$$S_1(n,x) =\frac{p x^{[n]_p}}{1 - x^p}, $$
$$S_2(n,x) =px^{[n+1]_p - 1}\left(\frac{[n+1]_p}{1-x^{p}} + \frac{p x^p}{(1-x^{p})^2}\right),$$
and
$$S_3(n,x) = \frac{p x^{[n+2]_p - 2}}{2} \left(\frac{[n+2]_p ([n+2]_p -1)}{1-x^{p}} + \frac{p (2 [n+2]_p + p - 1) x^p}{(1-x^{p})^2} + \frac{2 p^2 x^{2p}}{(1-x^{p})^3}\right).$$
\end{lem}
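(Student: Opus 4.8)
The plan is to evaluate $S_\ell(n,x)$ by recognizing the sum over $j$ as a discrete Fourier transform on $\Z/p\Z$, and extracting the relevant coefficient via a generating-function/partial-fractions manipulation. First I would set up the key identity: for $|x|>1$, expand
$$\frac{1}{(1-\xi_p^j x)^\ell} = \sum_{m=0}^{\infty} \binom{m+\ell-1}{\ell-1} \xi_p^{jm} x^m,$$
which converges since $|\xi_p^j x| = |x| > 1$... wait, this diverges. Instead I would use $|x|>1$ to write $(1-\xi_p^j x)^{-1} = -(\xi_p^j x)^{-1}(1 - \xi_p^{-j}x^{-1})^{-1}$ and expand in powers of $x^{-1}$, so
$$\frac{1}{(1-\xi_p^j x)^\ell} = \frac{(-1)^\ell}{(\xi_p^j x)^\ell}\sum_{m\ge 0}\binom{m+\ell-1}{\ell-1}\xi_p^{-jm}x^{-m} = (-1)^\ell\sum_{m\ge 0}\binom{m+\ell-1}{\ell-1}\xi_p^{-j(m+\ell)}x^{-(m+\ell)}.$$
Substituting into $S_\ell(n,x) = \sum_{j=0}^{p-1}\xi_p^{-nj}(1-\xi_p^jx)^{-\ell}$ and swapping the (absolutely convergent) sums, the inner sum $\sum_{j=0}^{p-1}\xi_p^{-j(n+m+\ell)}$ equals $p$ when $p \mid n+m+\ell$ and $0$ otherwise. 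Hence
$$S_\ell(n,x) = p(-1)^\ell \sum_{\substack{m \ge 0 \\ m \equiv -n-\ell \!\!\pmod p}}\binom{m+\ell-1}{\ell-1}x^{-(m+\ell)}.$$

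Next I would reparametrize: writing $m = [{-n-\ell}]_p + pt = ([n+\ell]_p' ) + pt$ — more precisely the smallest nonnegative $m$ in the residue class is $r := [-n-\ell]_p$, and note $m+\ell \equiv -n \pmod p$ so $m + \ell = (r+\ell) + pt$ with $t \ge 0$ (one checks $r + \ell \ge \ell \ge 1$, and $r+\ell = p + (r+\ell - p)$ if $r + \ell > p$; in all cases the exponent runs over a fixed residue class of $-n$). Then factor out $x^{-(r+\ell)}$ and sum the resulting series in $x^{-p}$. For $\ell = 1$ this is a plain geometric series giving $\frac{p x^{-(r+1)}}{1-x^{-p}} = \frac{p x^{p-(r+1)}}{x^p - 1}$, which after identifying $p - (r+1) = [n]_p$ (valid since $r = [-n-1]_p$) and a sign rearrangement yields $\frac{p x^{[n]_p}}{1-x^p}$. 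For $\ell = 2, 3$ the series $\sum_t \binom{pt + (\text{const})}{\ell-1} y^t$ (with $y = x^{-p}$) is a rational function obtained by differentiating $\sum_t y^t = (1-y)^{-1}$ once or twice and taking linear combinations; equivalently I would just differentiate $S_1$ with respect to $x$, since $\frac{d}{dx}(1-\xi_p^j x)^{-\ell} = \ell\xi_p^j(1-\xi_p^jx)^{-\ell-1}$ relates $S_{\ell+1}$ to a derivative of $S_\ell$ after shifting $n$ by $1$.

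This last observation suggests a cleaner route I would actually prefer: establish the $\ell = 1$ formula as above, then derive $S_2$ and $S_3$ by the recursion
$$S_{\ell+1}(n-1, x) = \frac{1}{\ell}\frac{d}{dx}S_\ell(n,x) + (\text{correction}),$$
obtained by differentiating $S_\ell(n,x) = \sum_j \xi_p^{-nj}(1-\xi_p^j x)^{-\ell}$ in $x$: one gets $\frac{d}{dx}S_\ell(n,x) = \ell \sum_j \xi_p^{-nj}\xi_p^j(1-\xi_p^jx)^{-\ell-1} = \ell S_{\ell+1}(n-1,x)$ exactly, with no correction term. So simply $S_{\ell+1}(n-1,x) = \frac{1}{\ell}\frac{d}{dx}S_\ell(n,x)$, i.e. $S_{\ell+1}(n,x) = \frac{1}{\ell}\frac{d}{dx}S_\ell(n+1,x)$. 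Then I would compute $\frac{d}{dx}$ of the stated closed form for $S_1(n+1,x) = \frac{p x^{[n+1]_p}}{1-x^p}$ to get the $S_2$ formula, and differentiate once more (with $n \mapsto n+1$ again) for $S_3$; each step is the quotient rule applied to $\frac{p x^{[\cdot]_p}}{1-x^p}$ and then regrouping over the common denominator $(1-x^p)^2$ resp. $(1-x^p)^3$.

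\textbf{Main obstacle.} The genuinely delicate point is the careful bookkeeping with $[n]_p$, $[n+1]_p$, $[n+2]_p$ when differentiating: the exponent $[n+1]_p$ in the closed form for $S_1(n+1,x)$ is a \emph{locally constant} function of $n$ (constant on residue classes), so $\frac{d}{dx}$ treats it as a constant, but one must verify that the integer $[n+1]_p$ appearing after differentiation is exactly the quantity $[n+1]_p$ demanded in the $S_2$ formula (and similarly $[n+2]_p$ for $S_3$) — in particular that no off-by-one in the shift $n \mapsto n+1$ creeps in, and that $x^{[n+1]_p - 1}$ makes sense (it does, but when $[n+1]_p = 0$ one should double-check the derivative of $x^0 = 1$ is $0$, consistent with the formula since then the first bracketed term vanishes). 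I would handle this by checking the recursion and the base case against the direct Fourier-coefficient formula derived in the first paragraph, which leaves no ambiguity. Everything else is routine differentiation and algebra, best relegated to a remark that the computations are straightforward.
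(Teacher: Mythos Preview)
Your proposal is correct and follows essentially the same approach as the paper: expand $(1-\xi_p^j x)^{-1}$ in negative powers of $x$ (valid since $|x|>1$), apply orthogonality to pick out one residue class, sum the resulting geometric series to obtain $S_1$, and then derive $S_2$ and $S_3$ via the exact recursion $S_{\ell+1}(n,x)=\frac{1}{\ell}\frac{d}{dx}S_\ell(n+1,x)$. The paper proceeds identically, including the identity $[-n-1]_p+[n]_p=p-1$ you use to simplify the exponent.
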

\begin{proof} We have
\begin{align*}S_1(n,x) =&\sum_{j=0}^{p-1} \frac{1}{(1-\xi_p^j x)\xi_p^{nj}}=-\frac{1}{x}\sum_{k=0}^\infty x^{-k}  \sum_{j=0}^{p-1}\xi_p^{-(n+1+k)j}\\
=&-\frac{p}{x}\sum_{\substack{k=0\\k\equiv -n-1\pmod{p}}}^\infty x^{-k} = -\frac{px^{-[-n-1]_p-1}}{1-x^{-p}},
\end{align*}
since the inner sum in the first line is equal to $p$ when $n+1+k$ is divisible by $p$, and 0 otherwise.

Notice that $[-n-1]_p+1=p-[n]_p$, giving 
\[S_1(n,x)=-\frac{px^{[n]_p}}{x^{p}-1}.\]
The expressions of $S_2(n,x)$ and $S_3(n,x)$ can be obtained by using the fact that $$S_{\ell+1}(n,x) = \frac{1}{\ell} \frac{\partial}{\partial x} S_\ell(n+1,x)$$ when $\ell > 0$.
\end{proof}
We will also need the case $x=1$. More precisely, we prove the following result.
\begin{lem} We have 
\[ \sum_{j=1}^{p-1}  \frac{\xi_p^{-nj}}{(1-\xi_p^j)}=\frac{p-1}{2}-[n]_p.\]
\end{lem}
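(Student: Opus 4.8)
The plan is to evaluate $\sum_{j=1}^{p-1} \xi_p^{-nj}/(1-\xi_p^j)$ directly, rather than as a limiting case $x \to 1$ of $S_1(n,x)$ (which is singular there since the $j$-term would blow up only if one included $j=0$, but here the sum starts at $j=1$, so the expression is genuinely finite). First I would reduce to $n \in \{0,1,\dots,p-1\}$, since the sum depends only on $[n]_p$; write $m = [n]_p$.

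The cleanest route is to expand $1/(1-\xi_p^j)$ as a geometric-type series after a symmetry trick. Note the identity $\frac{1}{1-\xi_p^j} = \frac{1}{p}\sum_{k=0}^{p-1}(p-1-k)\,\xi_p^{jk}$, which can be verified by multiplying both sides by $(1-\xi_p^j)$ and telescoping (using $\sum_{k=0}^{p-1}\xi_p^{jk}=0$ for $j \not\equiv 0$). Substituting this in and swapping the order of summation gives
\[
\sum_{j=1}^{p-1} \frac{\xi_p^{-mj}}{1-\xi_p^j} = \frac{1}{p}\sum_{k=0}^{p-1}(p-1-k)\sum_{j=1}^{p-1}\xi_p^{j(k-m)}.
\]
The inner sum equals $p-1$ when $k \equiv m \pmod p$, i.e. $k = m$ (since $0 \le k,m \le p-1$), and equals $-1$ otherwise. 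Hence the right-hand side is $\frac{1}{p}\big[(p-1-m)(p-1) - \sum_{k=0, k\neq m}^{p-1}(p-1-k)\big]$. Since $\sum_{k=0}^{p-1}(p-1-k) = \binom{p}{2} = \frac{p(p-1)}{2}$, the bracketed quantity is $(p-1-m)p - \frac{p(p-1)}{2}$, and dividing by $p$ yields $(p-1-m) - \frac{p-1}{2} = \frac{p-1}{2} - m$, as claimed.

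Alternatively — and perhaps even shorter — one can observe that pairing $j \leftrightarrow p-j$ and using $\frac{1}{1-\xi_p^j} + \frac{1}{1-\xi_p^{-j}} = 1$ gives $\sum_{j=1}^{p-1}\frac{1}{1-\xi_p^j} = \frac{p-1}{2}$ immediately (the $n=0$ case), and then for general $n$ one writes $\xi_p^{-nj} = 1 - (1-\xi_p^{-nj})$ and uses that $\frac{1-\xi_p^{-nj}}{1-\xi_p^j} = -\xi_p^{-nj}(1 + \xi_p^j + \cdots + \xi_p^{(n-1)j})$ (a polynomial in $\xi_p^j$), whose sum over $j=1,\dots,p-1$ is computable term by term via $\sum_{j=1}^{p-1}\xi_p^{jr} = p\mathbf{1}_{p\mid r} - 1$. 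This again produces $\frac{p-1}{2}-[n]_p$ after collecting the indicator contributions. I would present whichever of these is tidiest.

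There is no real obstacle here: the only thing to be careful about is the normalization $[n]_p \in \{0,\dots,p-1\}$ so that the "$k=m$" case is hit exactly once, and the bookkeeping of the $-1$ contributions from the non-matching terms. The identity $\frac{1}{1-\xi_p^j} = \frac{1}{p}\sum_{k=0}^{p-1}(p-1-k)\xi_p^{jk}$ (equivalently, the partial-fraction/geometric expansion) is the one computational input, and it is standard; I would either cite it as routine or include the two-line telescoping verification.
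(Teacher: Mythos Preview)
Your proposal is correct. Your second approach is essentially the paper's own proof: the paper also computes $\sum_{j=1}^{p-1}\frac{1-\xi_p^{aj}}{1-\xi_p^j}=p-a$ via the geometric sum $1+\xi_p^j+\cdots+\xi_p^{(a-1)j}$, and establishes the base case $\sum_{j=1}^{p-1}\frac{1}{1-\xi_p^j}=\frac{p-1}{2}$ (though via the logarithmic derivative of $\prod_{j=1}^{p-1}(x-\xi_p^j)$ at $x=1$ rather than your pairing $j\leftrightarrow p-j$). Your first approach, via the finite Fourier identity $\frac{1}{1-\xi_p^j}=\frac{1}{p}\sum_{k=0}^{p-1}(p-1-k)\,\xi_p^{jk}$, is a tidy alternative that does not appear in the paper; it replaces the two-step argument (base case plus difference) by a single substitution followed by orthogonality.
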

\begin{proof}
Notice that for $0< a \leq p-1$,
\[\sum_{j=1}^{p-1}  \frac{1-\xi_p^{aj}}{(1-\xi_p^j) }=\sum_{j=1}^{p-1} (1+\xi_p^j+\cdots+\xi_p^{(a-1)j})=p-a.\]
By looking at the logarithmic derivative at $x=1$ of $x^{p-1}+\cdots+x+1=\prod_{j=1}^{p-1}(x-\xi_p^j)$, we have
\[\sum_{j=1}^{p-1}  \frac{1}{(1-\xi_p^j) }=\frac{p-1}{2}.\]
We conclude by combining the above equations. 

\end{proof}

\subsection{Some combinatorial identities}

Here we consider some combinatorial identities that will be needed in the proof of Theorem \ref{polyn_k}. 
\begin{lem} \label{lem:Knuth}
 Let $r,t,m\geq 0$ be integers and $s$ real. Then 
 \begin{equation}\label{eq:Knuth}
 \sum_{j=0}^r \binom{r-j}{m}\binom{s}{j-t} (-1)^{j-t} =\binom{r-t-s}{r-t-m}.
 \end{equation}
\end{lem}
This can be found in Equation (24) in \cite[1.2.6]{Knuth}. 

\begin{lem} \label{lem:friendly}
 If $m, r, t\in \Z$ with $m>0$ and $r, t\geq 0$, then
 \begin{equation}\label{eq:friendly}
 m\binom{m+t}{t}\sum_{j=0}^{r}\binom{j+t}{t}\binom{t}{r-j} \frac{(-1)^{r-j} }{m+r-j}=\binom{r+m+t}{t}.
\end{equation}
 \end{lem}
\begin{proof}
First notice that 
\[\binom{j+t}{t}=(-1)^j \binom{-t-1}{j}.\]
Therefore, it suffices to prove 
\[\binom{-t-1}{m}\sum_{j=0}^{r}\binom{-t-1}{j}\binom{t}{r-j} \frac{m}{m+r-j}=\binom{-t-1}{m+r}.\]
Notice that 
\[\sum_{j=0}^{r}\binom{-t-1}{j}\binom{t}{r-j} x^{m-1+r-j}=[x^{m-1}(1+T)^{-t-1}(1+xT)^{t}]_{T^r},\]
where the notation $[\cdot]_{T^r}$ indicates the coefficient of $T^r$ in the expression inside the brackets. 
We integrate to get 
\[\sum_{j=0}^{r}\binom{-t-1}{j}\binom{t}{r-j} \frac{1}{m+r-j} =\Big [(1+T)^{-t-1} \int_0^1x^{m-1}(1+xT)^{t} dx \Big]_{T^r}. \]
By doing integration by parts repeatedly, we obtain
\begin{align*}(1+T)^{-t-1}\int_0^1x^{m-1}(1+xT)^{t} dx =&t!(m-1)!\sum_{h=0}^{m-1}\frac{(-1)^{h}}{(h+t+1)!(m-1-h)!}
\frac{(1+T)^h}{T^{h+1}}\\&+\frac{t!(m-1)! (-1)^m}{(m+t)!T^m(1+T)^{t+1}}.
 \end{align*}
 The only term above contributing to a monomial $T^r$  is the last one, with coefficient 
 \[\frac{t!(m-1)! (-1)^m}{(m+t)!}\binom{-t-1}{m+r}=\frac{1}{m}\binom{-t-1}{m}^{-1}\binom{-t-1}{m+r}.\]
This proves the result. 
 
\end{proof}

\section{Proof of Theorem \ref{polyn_k}}
\label{section:proof1}
Before proving Theorem \ref{polyn_k}, we first state some results that we will need and prove some preliminary lemmas. 

The following estimate for sums of the generalized divisor function in short intervals is due to Sawin \cite{sawin}, and will be crucial in our computations.
\begin{prop}(\cite[Theorem 1.1.]{sawin})\label{sawin_prop}
For natural numbers $n$, $h$, $k$ with $h < n$ and $f$ a monic polynomial of degree $n$ in $\F_q[T]$, we have
\begin{align*}
    \left| \sum_{\substack{g \in \F_q[T] \\ \deg g < h}} d_k (f+g) - \binom{n+k-1}{k-1} q^h \right| \le 3 \binom{n+k-1}{k-1} (k+2)^{2n-h} q^{\frac{1}{2} \left( h + \left\lfloor\frac{n}{p} \right\rfloor - \left\lfloor \frac{n-h}{p} \right\rfloor + 1 \right)} .
\end{align*}
\end{prop}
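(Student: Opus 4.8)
\noindent\textit{Proof proposal.} This is Theorem~1.1 of \cite{sawin}; the plan for proving it is to geometrize the sum and apply the Grothendieck--Lefschetz trace formula. First I would fix a composition $n=n_1+\dots+n_k$ of $n$ into $k$ nonnegative parts: the monic $k$-tuples $(f_1,\dots,f_k)$ with $\deg f_i=n_i$ form an affine space $\mathbb{A}^n$ over $\F_q$, and imposing that $f_1\cdots f_k$ agree with $f$ in every coefficient of degree $\ge h$ cuts out, via $n-h$ equations that are ``triangular'' in the coefficients, a variety $X_{\underline n}$ of dimension $h$ which is a finite cover of the short interval $\{f+g:\deg g<h\}\cong\mathbb{A}^h$ under $(f_i)\mapsto\prod f_i$. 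Since $d_k(f+g)$ counts the factorizations of $f+g$ into $k$ monic parts, this gives
\[
\sum_{\substack{g\in\F_q[T]\\ \deg g<h}}d_k(f+g)=\sum_{\underline n}\#X_{\underline n}(\F_q),
\]
the sum over the $\binom{n+k-1}{k-1}$ compositions; equivalently, the left-hand side is the trace of Frobenius on the cohomology of $\mathbb{A}^h$ with coefficients in a constructible sheaf assembled functorially from the factorization function $d_k$ (the formulation one wants for general factorization functions).

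Next I would apply the trace formula $\#X_{\underline n}(\F_q)=\sum_i(-1)^i\tr\bigl(\mathrm{Frob}_q\mid H^i_c(X_{\underline n,\overline{\F_q}},\Q_\ell)\bigr)$. As $X_{\underline n}$ is affine of dimension $h$ (a local complete intersection), Artin vanishing and duality kill $H^i_c$ for $i<h$, so only $h\le i\le 2h$ contribute. The main term should come from $i=2h$: the cover $X_{\underline n}\to\mathbb{A}^h$ has transitive monodromy (it permutes the factorizations of the generic polynomial in the short interval), so $H^{2h}_c(X_{\underline n})$ is one-dimensional of weight $2h$ with Frobenius $=q^h$, and summing over compositions yields exactly $\binom{n+k-1}{k-1}q^h$. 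The degrees $h\le i<2h$ form the error; by Deligne this is at most $\bigl(\sum_i\dim H^i_c(X_{\underline n})\bigr)q^{(2h-1)/2}$, and a Bombieri/Katz-type Betti-number bound for varieties cut out by controlled equations, summed over the $\binom{n+k-1}{k-1}$ compositions, is what produces the $\binom{n+k-1}{k-1}(k+2)^{2n-h}$ factor.

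The hard part is the exponent $\tfrac12\bigl(h+\lfloor n/p\rfloor-\lfloor(n-h)/p\rfloor+1\bigr)$, which --- once $h$ is large --- is \emph{strictly smaller} than the $h-\tfrac12$ coming from the crude bound above, so one must control the \emph{weights} of $H^i_c(X_{\underline n})$ for $h\le i<2h$, not just their dimensions. By duality these groups are dual to $H^{2h-i}$ of $X_{\underline n}$ with coefficients in its dualizing sheaf, whose weights are governed by the ramification of $X_{\underline n}\to\mathbb{A}^h$ along the divisor at infinity. This is exactly where the characteristic $p$ bites: the cover encoding how a degree-$n$ polynomial splits is \emph{wildly} ramified at infinity, with Swan conductor there bounded by $\lfloor n/p\rfloor-\lfloor(n-h)/p\rfloor$, i.e.\ the number of multiples of $p$ in $(n-h,\,n]$; this wild contribution is what obstructs full square-root cancellation and lets the error eigenvalues reach size $q^{(h+\lfloor n/p\rfloor-\lfloor(n-h)/p\rfloor+1)/2}$ but no larger. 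Pinning down the sheaf/cover precisely, proving the vanishing and weight estimates for its cohomology, and carrying out the characteristic-$p$ Swan-conductor computation are the heart of the argument and the main obstacle; the explicit constant $3$ should then fall out of careful bookkeeping.
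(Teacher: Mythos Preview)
The paper does not prove this proposition at all: it is quoted verbatim as Theorem~1.1 of \cite{sawin} and used as a black box in the proof of Lemma~\ref{mainlemma}. So there is no ``paper's own proof'' to compare your proposal against.

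That said, your sketch is a reasonable high-level outline of Sawin's strategy --- geometrize the short-interval divisor sum as a point count on a variety fibered over $\mathbb{A}^h$, isolate the main term from top-degree compactly supported cohomology, and bound the remaining cohomology by controlling weights --- and you correctly flag that the characteristic-$p$ contribution $\lfloor n/p\rfloor-\lfloor(n-h)/p\rfloor$ is the crux. Two caveats: first, Sawin's actual argument is organized around the sheaf-theoretic formalism (pushforwards of the constant sheaf along the multiplication map, perverse truncations, and a careful analysis of the singularities of the map) rather than directly via the varieties $X_{\underline n}$, and the weight bound is obtained not from a Swan-conductor-at-infinity computation per se but from a more delicate study of how the relevant complex behaves under restriction and vanishing cycles; your ``wild ramification'' heuristic points in the right direction but is not literally how the exponent is derived. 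Second, the Betti-number bound producing $(k+2)^{2n-h}$ is not a generic Bombieri--Katz estimate but a specific combinatorial count tied to the stratification Sawin uses. None of this is a gap so much as a warning that the ``main obstacle'' you identify is genuinely substantial, and a full proof along the lines you sketch would essentially reproduce \cite{sawin}.
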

We will now prove the following lemma.
\begin{lem}\label{mainlemma}
Let $n>d\geq 0$ and let $A(x) \in \mathbb{F}_q[x]$ such that $A(0) \neq 0$ and $\deg(A) \leq d$. Then
\begin{align*}
\sum_{\substack{\deg(F)=n \\F(x) \equiv A(x) \pmod{x^{d+1}}}}d_k(F) =& \sum_{j=0}^{n-d-1} \binom{n-j+k-1}{k-1}\binom{k}{j} (-1)^j q^{n-d-j}\\&+O\Bigg(\binom{k-1}{n-d-1} \frac{d-\deg(A)+k}{n-\deg(A)}\binom{d-\deg(A)+k-1}{k-1}d_k(A)\\&+
(k+2)^{n+d}
q^{\frac{1}{2}\left((n-d)\left(1+\frac{1}{p}\right)+2  \right)} (n+k-1)^{k-1} \Bigg),
\end{align*}
and the implied constant in the error term above does not depend on $k$ and $q$.
\end{lem}

\begin{proof}
Let 
$$A(x) = \sum_{j=0}^d a_j x^j,$$ where $a_j \in \mathbb{F}_q$, $a_0 \neq 0$ (note that $a_d$ can be $0$).
Also note that we can assume, without loss of generality, that $a_0=1$. Otherwise, we can rewrite the sum as
$$\sum_{\substack{\deg(F)=n \\F(x) \equiv A(x) \pmod{x^{d+1}}}}d_k(F) = \sum_{\substack{\deg(F) = n \\ a_0^{-1} F(x) \equiv A_1(x) \pmod{x^{d+1}}}} d_k(F),$$ where $A_1(x) = a_0^{-1} A(x)$. Since $d_k(F) = d_k(cF)$ for $c \in \mathbb{F}_q^{*}$, we easily see that 
$$\sum_{\substack{\deg(F)=n \\F(x) \equiv A(x) \pmod{x^{d+1}}}}d_k(F) = \sum_{\substack{\deg(F)=n \\F(x) \equiv A_1(x) \pmod{x^{d+1}}}}d_k(F) ,$$ and $A_1(x)$ has the property that its constant coefficient is $1$.

Now since $F(x) \equiv A(x) \pmod{x^{d+1}}$, we write
$$F(x) = f_n x^n+ \cdots +f_{d+1}x^{d+1}+a_d x^d+ \cdots+a_1x+1,$$
where $f_n\not =0$. 
Now let 
$$F^{*}(x) = x^n F\Big( \frac{1}{x}\Big)$$ be the reverse polynomial of $F$. Then we have
$$F^{*}(x) = x^n+a_1x^{n-1}+ \cdots+a_dx^{n-d}+f_{d+1} x^{n-d-1}+\cdots+f_n,$$ and $\deg(F^{*})=n$. Note that we can write
$$F^{*}(x) = x^{n-\deg(A)} A^{*}(x) + g(x),$$ where $g(x)$ varies over polynomials of degree less than $n-d$ and such that $g(0)\not =0$. Also note that for $F(x)$ such that $F(0) \neq 0$, we have $d_k(F) = d_k(F^{*})$. 
Hence we rewrite 
$$ \sum_{\substack{\deg(F) =n \\F(x) \equiv A(x) \pmod{x^{d+1}}}}d_k(F) = \sum_{\substack{\deg(g)<n-d\\g(0)\not = 0 }}d_k(g(x)+x^{n-\deg(A)}A^*(x)).$$

We have 
\begin{align}\label{eq:genalphabeta}
 &\sum_{\substack{\deg(g)<n-d} }d_k(g(x)+x^{n-\deg(A)}A^*(x))\nonumber\\&=d_k(x^{n-\deg(A)}A^*(x))+\sum_{j=0}^{n-d-1}d_k(x^j)\sum_{\substack{\deg(g)<n-d-j\\g(0)\not =0} }d_k(g(x)+x^{n-j-\deg(A)}A^*(x))\nonumber\\
 &=\binom{n-\deg(A)+k-1}{k-1} d_k(A^*(x))+\sum_{j=0}^{n-d-1}\binom{j+k-1}{k-1}\sum_{\substack{\deg(g)<n-d-j\\g(0)\not =0} }d_k(g(x)+x^{n-j-\deg(A)}A^*(x)),
\end{align}
where the term $d_k(x^{n-\deg(A)}A^*(x))$ is accounting for the polynomial $g=0$, which we consider to have degree $-\infty$. 

For $n>0$, we let  \[\gamma_{n}:=\sum_{\substack{\deg(g)<n} }d_k(g(x)+x^{n+d-\deg(A)}A^*(x))\quad \mbox{ and }\quad\beta_{n}:=\sum_{\substack{\deg(g)<n\\g(0)\not =0} }d_k(g(x)+x^{n+d-\deg(A)}A^*(x)).\]
Set also by convention (to take into account the polynomial $g=0$),  $\gamma_0=d_k(x^{d-\deg(A)}A^*(x))$ and 
$\beta_0=d_k(A^*(x)) $. 
Then, \eqref{eq:genalphabeta} can be written as 
\begin{equation}\label{eq:genalphabeta2}
\gamma_{n-d}=\binom{n-\deg(A)+k-1}{k-1}  \beta_0+ \sum_{j=0}^{n-d-1}\binom{j+k-1}{k-1}\beta_{n-d-j}.
\end{equation}
We claim that  for $n>0$, 
\begin{equation}\label{eq:alphabeta}\beta_n=(-1)^n \binom{k-1}{n-1} \frac{d-\deg(A)+k}{d-\deg(A)+n}\gamma_0+\sum_{j=0}^{n-1} \binom{k}{j} (-1)^j \gamma_{n-j}.\end{equation}
Indeed,  for $n=1$ we have from \eqref{eq:genalphabeta2} that $\gamma_1=\binom{d+1-\deg(A)+k-1}{k-1}  \beta_0+\beta_1$, which gives $\beta_1=\gamma_1- \frac{d-\deg(A)+k}{d-\deg(A)+1}\gamma_0$. We proceed by induction. Suppose that \eqref{eq:alphabeta} is true for all positive integers up to $n$. By  \eqref{eq:genalphabeta2}, we have 
\[\gamma_{n+1}=\binom{n+d+1-\deg(A)+k-1}{k-1}  \beta_0+ \sum_{j=0}^{n}\binom{j+k-1}{k-1}\beta_{n+1-j}\]
and
\begin{align*}\beta_{n+1}=&\gamma_{n+1}-\binom{n+d-\deg(A)+k}{k-1}  \beta_0-\sum_{j=1}^{n}\binom{j+k-1}{k-1}\beta_{n+1-j}\\
=&\gamma_{n+1}-\binom{n+d-\deg(A)+k}{k-1} \binom{d-\deg(A)+k-1}{k-1}^{-1} \gamma_0\\& -(d-\deg(A)+k)\sum_{j=1}^{n}\binom{j+k-1}{k-1}\binom{k-1}{n-j} \frac{(-1)^{n+1-j} }{d-\deg(A)+n+1-j}\gamma_0\\&-\sum_{j=1}^{n}\binom{j+k-1}{k-1}
\sum_{h=0}^{n-j} \binom{k}{h} (-1)^h \gamma_{n+1-j-h}.
\end{align*}
Applying Lemma~\ref{lem:friendly}, we have 
\begin{align*}
\beta_{n+1}=&\gamma_{n+1}+(-1)^{n+1}\binom{k-1}{n}\frac{d-\deg(A)+k}{d-\deg(A)+n+1}\gamma_0\\& -\sum_{j=1}^{n}\binom{j+k-1}{k-1}
\sum_{\ell=j}^{n} \binom{k}{\ell-j} (-1)^{\ell-j} \gamma_{n+1-\ell }, \quad \mbox{where $\ell=j+h$},
\\
=&\gamma_{n+1}+(-1)^{n+1}\binom{k-1}{n}\frac{d-\deg(A)+k}{d-\deg(A)+n+1}\gamma_0\\&-\sum_{\ell=1}^{n}
\sum_{j=1}^{\ell}\binom{j+k-1}{k-1}
 \binom{k}{\ell-j} (-1)^{\ell-j} \gamma_{n+1-\ell }
\\
=&\gamma_{n+1}+(-1)^{n+1}\binom{k-1}{n}\frac{d-\deg(A)+k}{d-\deg(A)+n+1}\gamma_0\\&-\sum_{\ell=1}^{n}
\sum_{m=0}^{\ell-1}\binom{\ell+k-1-m}{k-1}
 \binom{k}{m} (-1)^{m} \gamma_{n+1-\ell },  \quad \mbox{where $m=\ell-j$},
\\
=&\gamma_{n+1}+(-1)^{n+1}\binom{k-1}{n}\frac{d-\deg(A)+k}{d-\deg(A)+n+1}\gamma_0\\&+\sum_{\ell=1}^{n}
\binom{k}{\ell}(-1)^\ell\gamma_{n+1-\ell },
\end{align*}
where in the last equality we applied Lemma~\ref{lem:Knuth}. 

This concludes the induction proving \eqref{eq:alphabeta}. 

Putting \eqref{eq:genalphabeta} together with Proposition \ref{sawin_prop} in the case $h=n-d-j$ and using the fact that $A^{*}(x)x^{n-\deg(A)-j}$ is a monic polynomial of degree $n-j$, it follows from \eqref{eq:alphabeta} that
\begin{align*}&\sum_{\substack{\deg(F)=n \\F(x) \equiv A(x) \pmod{x^{d+1}}}}d_k(F) = \sum_{\substack{\deg(g)<n-d\\g(0)\not=0} }d_k(g(x)+x^{n-\deg(A)}A^*(x))= \beta_{n-d}\\=&(-1)^n \binom{k-1}{n-d-1} \frac{d-\deg(A)+k}{n-\deg(A)}\gamma_0+\sum_{j=0}^{n-d-1} \binom{k}{j} (-1)^j \gamma_{n-d-j}\\
=&\sum_{j=0}^{n-d-1} \binom{n-j+k-1}{k-1}\binom{k}{j} (-1)^j q^{n-d-j}\\&+O\Bigg(\binom{k-1}{n-d-1} \frac{d-\deg(A)+k}{n-\deg(A)}\binom{d-\deg(A)+k-1}{k-1}d_k(A)\\&+ \sum_{j=0}^{n-d-1} \binom{n-j+k-1}{k-1} \binom{k}{j}  (k+2)^{n+d-j}q^{\frac{1}{2}\left(n-d -j+ \left\lfloor \frac{n-j}{p} \right\rfloor - \left\lfloor \frac{d}{p} \right\rfloor +1  \right)}\Bigg).
\end{align*}
Now notice that the sum over $j$ in the error term above is 
\begin{align*}
&\sum_{j=0}^{n-d-1} \binom{n-j+k-1}{k-1} \binom{k}{j}  (k+2)^{n+d-j}q^{\frac{1}{2}\left(n-d -j+ \left\lfloor \frac{n-j}{p} \right\rfloor - \left\lfloor \frac{d}{p} \right\rfloor +1  \right)} \\
&\le (k+2)^{n+d} \sum_{j=0}^{n-d-1}  (n-j+k-1)^{k-1} k^j  (k+2)^{-j }
q^{\frac{1}{2}\left((n-d -j)\left(1+\frac{1}{p}\right)+2  \right)} \\
&\le (k+2)^{n+d}
q^{\frac{1}{2}\left((n-d)\left(1+\frac{1}{p}\right)+2  \right)} \sum_{j=0}^{n-d-1}  (n-j+k-1)^{k-1}
q^{-\frac{j}{2}\left(1+\frac{1}{p} \right)}\\
& \leq (k+2)^{n+d}
q^{\frac{1}{2}\left((n-d)\left(1+\frac{1}{p}\right)+2  \right)}  (n+k-1)^{k-1} \frac{1}{1-q^{- \frac{1}{2}\left(1+\frac{1}{p}\right)}}\\ & \leq (k+2)^{n+d}
q^{\frac{1}{2}\left((n-d)\left(1+\frac{1}{p}\right)+2  \right)} \frac{(n+k-1)^{k-1}}{1-3^{-\frac{1}{2}}}.
\end{align*}
This finishes the proof of the statement.
\end{proof}

Now for $\ell \pmod{p}$, we let
\begin{equation*}
\alpha_k(\ell)=\sum_{b \in \F_q}\frac{\psi(\tr_{q/p}(b))^{\ell}}{\left(1-\frac{\psi(\tr_{q/p}(b))}{\sqrt{q}}\right)^k}.
\end{equation*}
We will prove the following. 
\begin{lem}
\label{alpha}
For $\ell \pmod{p}$, we have 
\begin{align*}
    \alpha_k(\ell) = \frac{q}{p} \sum_{j=0}^{p-1} \frac{\xi_p^{j \ell}}{\left( 1 -q^{-1/2}\xi_p^j  \right)^{k}}.
\end{align*}
\end{lem}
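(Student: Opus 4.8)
The plan is to collapse the sum over $b\in\F_q$ in \eqref{alphal} to a sum over $\F_p$ by fibering over the absolute trace map. The key input is that $\tr_{q/p}\colon\F_q\to\F_p$ is a nonzero $\F_p$-linear functional, hence surjective, with each fiber a coset of $\ker(\tr_{q/p})$ and therefore of cardinality $q/p$. Grouping the summand in \eqref{alphal} according to the value $m=\tr_{q/p}(b)\in\F_p$ then gives
\[
\alpha_k(\ell)=\frac{q}{p}\sum_{m\in\F_p}\frac{\psi(m)^{\ell}}{\bigl(1-\tfrac{\psi(m)}{\sqrt q}\bigr)^{k}}.
\]

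Next I would use that $\psi$ is a homomorphism $(\F_p,+)\to\C^{\times}$: since $p$ is prime and $\psi$ is nontrivial, $\xi:=\psi(1)$ is a primitive $p$-th root of unity and $\psi(m)=\xi^{m}$ for every $m$, so $\psi(m)^{\ell}=\xi^{m\ell}$. Letting the index $m$ run through $\{0,1,\dots,p-1\}$ and renaming it $j$, the displayed sum becomes exactly $\tfrac{q}{p}\sum_{j=0}^{p-1}\xi^{j\ell}/(1-q^{-1/2}\xi^{j})^{k}$, which is the asserted formula with $\xi_p=\xi$. To reconcile this with the statement as written — where $\xi_p$ is an arbitrary primitive $p$-th root of unity and the conclusion is claimed to be independent of $\psi$ — I would observe that replacing $\xi_p$ by another primitive root $\xi_p^{c}$ with $\gcd(c,p)=1$ only reindexes the sum by the permutation $j\mapsto cj\bmod p$, hence leaves it unchanged; in particular every admissible choice of $\psi$ produces the same value.

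There is no serious obstacle here: the argument is essentially a single change of variables. The only point that warrants a moment's care is the bookkeeping in the second step — correctly tracking how the outer exponent $\ell$ combines with $\psi(m)^{\ell}=\xi^{m\ell}$, and verifying that the final sum genuinely does not depend on which primitive $p$-th root of unity one writes as $\xi_p$ (equivalently, on the choice of additive character $\psi$), which is exactly the invariance described above.
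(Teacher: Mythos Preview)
Your proof is correct and in fact cleaner than the paper's. The paper proceeds by expanding $(1-\psi(\tr_{q/p}(b))/\sqrt q)^{-k}$ as a binomial series in $h$, swapping the order of summation, evaluating $\sum_{b\in\F_q}\psi(\tr_{q/p}(b))^{\ell+h}$ via orthogonality (it equals $q$ when $p\mid\ell+h$ and $0$ otherwise), inserting the root-of-unity detector $\tfrac{1}{p}\sum_{j}\xi_p^{j(\ell+h)}$ for this divisibility, and then re-summing the binomial series. Your argument bypasses the expansion entirely by observing at the outset that the summand depends on $b$ only through $\tr_{q/p}(b)$, so fibering over the trace immediately produces the factor $q/p$ and a sum over $\F_p$; identifying $\psi(m)=\xi^m$ then finishes. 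What the paper's route costs in length it does not gain in generality here, so your approach is a genuine simplification. Your remark on independence of the choice of $\xi_p$ (equivalently of $\psi$) via the reindexing $j\mapsto cj$ is also a point the paper does not spell out explicitly.
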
 
\begin{proof}
We have
\begin{align*}
    \alpha_k(\ell) &= \sum_{b \in \F_q} \psi(\tr_{q/p}(b))^{\ell} \left(1-\frac{\psi(\tr_{q/p}(b))}{\sqrt{q}}\right)^{-k} \\
    &= \sum_{b \in \F_q} \psi(\tr_{q/p}(b))^{\ell} \sum_{h=0}^\infty \binom{-k}{h} \left(-\frac{\psi(\tr_{q/p}(b))}{\sqrt{q}}\right)^{h}
\end{align*}
by the binomial theorem. Therefore,
\begin{align*}
    \alpha_k(\ell) &= \sum_{h=0}^\infty \binom{-k}{h} (-q^{-1/2})^{h} \sum_{b \in \F_q} \psi(\tr_{q/p}(b))^{\ell + h}.
\end{align*}
The inner sum above  is equal to $q$ when $\ell + h$ is divisible by $p$. Otherwise, the inner sum is equal to zero since $\psi$ is a nontrivial additive character. Therefore,
\begin{align*}
    \alpha_k(\ell) &= \frac{q}{p} \sum_{h=0}^\infty \binom{-k}{h} (-q^{-1/2})^{h} \sum_{j=0}^{p-1} \xi_p^{j(\ell+h)}.
\end{align*}
We switch the order of summation and use the binomial theorem again to conclude that
\begin{align*}
    \alpha_k(\ell) &= \frac{q}{p} \sum_{j=0}^{p-1} \xi_p^{j\ell} \sum_{h=0}^\infty \binom{-k}{h} \left( -q^{-1/2}\xi_p^j\right)^{h} = \frac{q}{p} \sum_{j=0}^{p-1} \frac{\xi_p^{j\ell}}{  \left( 1 -q^{-\frac{1}{2}} \xi_p^{j}  \right)^{k}}.
\end{align*}
\end{proof}
We are now ready to begin the proof of Theorem \ref{polyn_k}.
\begin{proof}[Proof of Theorem \ref{polyn_k}]
 Using \eqref{aodd}, Lemma \ref{shift}, and Proposition \ref{entin_lemma}, we write 
\begin{align}
\frac{1}{|\mathcal{AS}_d^0|}\sum_{f\in \mathcal{AS}_d^0}\mathcal{L}\left(\frac{1}{\sqrt{q}},f,\psi\right)^k =& 
\frac{1}{q|\mathcal{F}_d|}\sum_{b \in \F_q}\sum_{f\in \mathcal{F}_d}\mathcal{L}\left(\frac{1}{\sqrt{q}},f+b,\psi\right)^k \nonumber \\
 =& 
\frac{1}{q|\mathcal{F}_d|}\sum_{b \in \F_q}\sum_{f\in \mathcal{F}_d}\mathcal{L}\left(\frac{\psi(\tr_{q/p}(b))}{\sqrt{q}},f,\psi\right)^k \nonumber \\
=&\frac{1}{q|\mathcal{F}_d|}\sum_{b \in \F_q}\frac{1}{\left(1-\frac{\psi(\tr_{q/p}(b))}{\sqrt{q}}\right)^k}\sum_{f\in \mathcal{F}_d}\mathcal{L}\left(\frac{\psi(\tr_{q/p}(b))}{\sqrt{q}},\chi_f\right)^k \nonumber \\
=&\frac{1}{q|\mathcal{F}_d|}\sum_{b \in \F_q}\frac{1}{\left(1-\frac{\psi(\tr_{q/p}(b))}{\sqrt{q}}\right)^k}\sum_{f\in \mathcal{F}_d}\sum_{\substack{F\in \mathcal{M}_{\leq kd}\\F(0)\not =0}}\frac{d_k(F)\chi_f(F)\psi(\tr_{q/p}(b))^{\deg(F)}}{\sqrt{|F|}} \nonumber \\
=&\frac{1}{q|\mathcal{F}_d|}\sum_{f\in \mathcal{F}_d}\sum_{\substack{F\in \mathcal{M}_{\leq kd}\\F(0)\not =0}}\frac{\alpha_k(\deg(F)) d_k(F)\chi_f(F)}{\sqrt{|F|}}.\label{firstsum}
\end{align}

Interchanging the sums over $f$ and $F$, we then need to study $\sum_{f\in \mathcal{F}_d}\chi_f(F)$ for $F$ fixed. 

Let \[H_n=\{\chi \pmod{x^n} \, :\, \chi^p=1\}\]
and 
\[H_n^\text{pr}=\{\chi \in H_n \, : \,  \chi \text{ primitive}\}.\]
(We will work with both $n=d$ and $n=d+1$.)
Using Proposition \ref{entin_lemma}, there is a bijection between $\mathcal{F}_d$ and $H_{d+1}^\text{pr}$. Moreover, a character in $H_{d+1}$ that is not primitive is necessarily a character in $H_d$. Thus we have 
\[H_{d+1}^\text{pr}=H_{d+1}\setminus H_d.\]
It follows that
\begin{align*}\sum_{f\in \mathcal{F}_d}\chi_f(F)=& \sum_{\chi \in H_{d+1}^\text{pr}}\chi(F)=\sum_{\chi \in H_{d+1}}\chi(F)-\sum_{\chi \in H_{d}}\chi(F).
\end{align*}
Now, using \eqref{orthog}, we have 
\begin{align}
\label{sumf}
\sum_{f\in \mathcal{F}_d}\chi_f(F)=\begin{cases}
0 & F \not \in H_d^\bot,\\
-|H_d| & F \in H_d^\bot\setminus H_{d+1}^\bot,\\
|H_{d+1}|-|H_d|& F \in H_{d+1}^\bot.
\end{cases}
\end{align}
Notice that $H_d\subseteq H_{d+1}$ implies that $H_{d+1}^\bot\subseteq H_{d}^\bot$.

Let us compute the order of $H_n$ with $p \nmid n$. Following the proof of \cite[Lemma 7.1]{Ent},
 $|H_n|$ corresponds to $|(\F_q[x]/x^n)^\times[p]|$ and that corresponds to counting polynomials  $g(x)=\sum_{j=0}^{n-1}c_jx^j$ such that $g(x)^p\equiv 1 \pmod{x^n}$ and $c_0\not = 0$ (so that $g(x)$ is a unit). 
Taking the $p^{\text{th}}$ power, we see that this imposes the condition $c_0^p=1$ (implying $c_0=1$) and $c_1^p=\cdots =c_{\left \lfloor \frac{n-1}{p}\right\rfloor}^p=0$
(implying $c_1=\cdots =c_{\left \lfloor \frac{n-1}{p}\right\rfloor}=0$). The total count is then 
\[|H_n| = \frac{q^{n-1}(q-1)}{q^{\left \lfloor \frac{n-1}{p}\right \rfloor}(q-1)}=q^{n-1-\left \lfloor \frac{n}{p}\right \rfloor},\]
where we have used that, since $p \nmid n$, we have $\left \lfloor \frac{n-1}{p}\right \rfloor=\left \lfloor \frac{n}{p}\right \rfloor$.

By \cite[Lemma 4.1]{EntPir}, $F(x) \in \F_q[x]$ with $F(0)\not = 0$
satisfies $\chi(F)=1$ for all $\chi \in H_{d+1}$ if and only if  $F(x) \equiv R(x^p)\pmod{x^{d+1}}$ for some $R(x) \in \F_q[x]$ with $R(0)\not = 0$ and $\deg(R)\leq \left \lfloor\frac{d}{p}\right \rfloor$. 
A similar result applies for  $F(x) \in \F_q[x]$ with $F(0)\not = 0$
that satisfies $\chi(F)=1$ for all $\chi \in H_{d}$.

Putting \eqref{firstsum}, \eqref{sumf} and the observation above together, we get that
\begin{align}\frac{1}{|\mathcal{AS}_d^0|}\sum_{f\in \mathcal{AS}_d^0}\mathcal{L}\left(\frac{1}{\sqrt{q}},f,\psi\right)^k 
=&\frac{|H_{d+1}|}{q|\mathcal{F}_d|}\sum_{\substack{F\in \mathcal{M}_{\leq kd}\\F(0)\not =0\\F(x) \in H_{d+1}^\bot}}\frac{\alpha_k(\deg(F)) d_k(F)}{\sqrt{|F|}} \nonumber \\
&-\frac{|H_d|}{q|\mathcal{F}_d|}\sum_{\substack{F\in \mathcal{M}_{\leq kd}\\F(0)\not =0\\F(x) \in H_d^\bot}}\frac{\alpha_k(\deg(F)) d_k(F)}{\sqrt{|F|}} \nonumber \\
=& \frac{|H_{d+1}|}{q|\mathcal{F}_d|} S_{k,d+1}-\frac{|H_d|}{q|\mathcal{F}_d|}S_{k,d} \nonumber \\
=& \frac{S_{k,d+1}}{q-1} -\frac{S_{k,d}}{q(q-1)}, \label{secondsum}
\end{align}
where
\begin{align*}
S_{k,d+1}=&\sum_{\substack{\deg(R)\leq \left \lfloor \frac{d}{p}\right \rfloor \\ R(0)\not =0}}
\sum_{\substack{F\in \mathcal{M}_{\leq kd}\\F(x) \equiv R(x^p) \pmod{x^{d+1}}}}\frac{\alpha_k(\deg(F))d_k(F)}{{\sqrt{|F|}}},
\end{align*}
and 
\begin{align*}
S_{k,d}=&\sum_{\substack{\deg(R)\leq \left \lfloor \frac{d-1}{p}\right \rfloor \\ R(0)\not =0}}
\sum_{\substack{F\in \mathcal{M}_{\leq kd}\\F(x) \equiv R(x^p) \pmod{x^{d}}}}\frac{\alpha_k(\deg(F))d_k(F)}{{\sqrt{|F|}}}.
\end{align*}
Notice that since $p \nmid d$, the condition $\deg(R)\leq \left \lfloor \frac{d}{p}\right \rfloor 
$ is equivalent to the condition 
$\deg(R)\leq \left \lfloor \frac{d-1}{p}\right \rfloor$.

Now note that the terms in the inner sum that satisfy $\deg(F)<d$ for $S_{k,d}$ (resp. $\deg(F)<d+1$ for $S_{k,d+1}$) have the property that $F(x)=R(x^p)$, and we can write $R(x^p)=R_0(x)^p$ by applying the Frobenius automorphism.

When $k=1$, note that we have $S_{1,d+1}=S_{1,d}$, and using \eqref{secondsum}, we have that the moment under consideration equals
\begin{align*}
\frac{1}{q}\sum_{\substack{R\in \mathcal{M}_{\leq \left \lfloor \frac{d}{p}\right \rfloor}\\ R(0)\not =0}}
\frac{\alpha_1(\deg(R)p)}{{|R|^{\frac{p}{2}}}}=\frac{\alpha_1(0)}{q}\sum_{j=0}^{\left \lfloor \frac{d}{p}\right \rfloor} \frac{q^{j-1}(q-1)}{q^{\frac{jp}{2}}} =\frac{\alpha_1(0)}{q}(1-q^{-1}) \frac{1-q^{\left(1-\frac{p}{2}\right)\left(\left \lfloor \frac{d}{p}\right \rfloor+1\right)} }{1-q^{\left(1-\frac{p}{2}\right)}}.
\end{align*}

Note that Lemmas \ref{alpha} and \ref{lem:S} imply that 
\begin{align*}
\alpha_1(0) = \frac{q}{p} \sum_{j=0}^{p-1} \frac{1}{1- \frac{\xi_p^j}{\sqrt{q}}}=
- \frac{q^\frac{3}{2}}{p} \sum_{m=0}^{p-1} \frac{\xi_p^{m}}{1-\xi_p^{m}\sqrt{q}}=
- \frac{q^\frac{3}{2}}{p}  S_1 \Big( -1 ,\sqrt{q}\Big)= \frac{q}{1-q^{-\frac{p}{2}}}.
\end{align*}

Putting the above together finishes the proof of Theorem \ref{polyn_k} in the case $k=1$. 

Now we consider $k>1$. Suppose that $n>d$. 
Then, using Lemma \ref{mainlemma}, we have that
\begin{align*}&\sum_{\substack{\deg(F)=n \\F(x) \equiv R_0(x)^p \pmod{x^{d+1}}}}d_k(F) =\sum_{j=0}^{n-d-1} \binom{n-j+k-1}{k-1}\binom{k}{j} (-1)^j q^{n-d-j}\\
&+O\Bigg(\binom{k-1}{n-d-1} \frac{d-p\deg(R_0)+k}{n-p\deg(R_0)}\binom{d-p\deg(R_0)+k-1}{k-1}d_k(R_0^p)\\&+ (k+2)^{n+d} q^{\frac{1}{2}\left((n-d)\left(1+\frac{1}{p}\right)+2  \right)}   (n+k-1)^{k-1} \Bigg)\\
&=\sum_{j=0}^{n-d-1} \binom{n-j+k-1}{k-1}\binom{k}{j} (-1)^j q^{n-d-j}\\
&+O\left(\binom{k-1}{n-d-1} \frac{d+k}{n-d}\binom{d+k-1}{k-1}d_k(R_0^p)\right.\\&+\left. (k+2)^{n+d} q^{\frac{1}{2}\left((n-d)\left(1+\frac{1}{p}\right)+2  \right)}   (n+k-1)^{k-1} \right).
\end{align*}
Notice that the  main term in the above expression is independent of $R(x^p)$. Moreover, $d_k(F)=d_k(cF)$
for any $c \in \F_q^*$ and similarly $|F|=|cF|$. Thus, we get  the same value if we sum over $F$ non-monic and divide by $q-1$ to account for the leading coefficient. Putting all of this together, we have 
\begin{align}
S_{k,d+1}=& \alpha_k(0)
\sum_{\substack{R_0\in \mathcal{M}_{\leq \left \lfloor \frac{d}{p}\right \rfloor }\\ R_0(0)\not =0}}   \frac{d_k(R_0^p)}{{|R_0|^{\frac{p}{2}}}} \label{sk+1} \\
&+\frac{1}{q-1} \sum_{\substack{\deg(R)\leq \left \lfloor \frac{d}{p}\right \rfloor \\R(0)\not = 0}} \sum_{d < n \leq kd}\frac{\alpha_k(n)}{q^{\frac{n}{2}}}\sum_{\substack{\deg(F)=n\\F(x) \equiv R(x^p) \pmod{x^{d+1}}}}d_k(F) \nonumber \\
=&\alpha_k(0)\sum_{\substack{R_0\in \mathcal{M}_{\leq \left \lfloor \frac{d}{p}\right \rfloor}\\ R_0(0)\not =0}}
\frac{d_k(R_0^p)}{{|R_0|^{\frac{p}{2}}}}+q^{\left \lfloor \frac{d}{p}\right \rfloor}\sum_{d< n \leq kd}\frac{\alpha_k(n)}{q^{\frac{n}{2}}} \sum_{j=0}^{n-d-1} \binom{n-j+k-1}{k-1}\binom{k}{j} (-1)^j q^{n-d-j} \nonumber 
\\
&+O\Bigg( \frac{q}{(1-q^{-1/2})^k}\sum_{\substack{R_0\in \mathcal{M}_{\leq \left \lfloor \frac{d}{p}\right \rfloor}\\ R_0(0)\not =0}} d_k(R_0^p) \sum_{d<n\leq kd} \frac{1}{q^{\frac{n}{2}}} \binom{k-1}{n-d-1} \binom{d+k-1}{k-1} \frac{d+k}{n-d}\nonumber \\
& +\frac{q}{(1-q^{-1/2})^k}\sum_{d<n \leq kd} (k+2)^{n+d} q^{\frac{n}{2p}-\frac{d}{2}+\frac{d}{2p}+1}(n+k-1)^{k-1}\Bigg), \nonumber
\end{align}
where we have used the bound $|\alpha_k(n)|\leq \frac{q}{(1-q^{-1/2})^k}$.

Now we bound the first error term in the equation above. We have 
\begin{align}
 \sum_{\substack{R_0\in \mathcal{M}_{\leq \left \lfloor \frac{d}{p}\right \rfloor}\\ R_0(0)\not =0}} & d_k(R_0^p) \sum_{d<n\leq kd} \frac{1}{q^{\frac{n}{2}}} \binom{k-1}{n-d-1} \binom{d+k-1}{k-1} \frac{d+k}{n-d} \nonumber  \\
& \leq (d+k)^k \sum_{\substack{R_0\in \mathcal{M}_{\leq \left \lfloor \frac{d}{p}\right \rfloor}}} d_k(R_0^p) \sum_{d<n\leq kd} \frac{k^{n-d-1}}{q^{\frac{n}{2}}}  \nonumber \\
& \leq (d+k)^k k^{-1} q^{-d/2} \sum_{m=1}^{d(k-1)} \frac{k^m}{q^{m/2}}\sum_{\substack{R_0\in \mathcal{M}_{\leq \left \lfloor \frac{d}{p}\right \rfloor}}} d_k(R_0^p). \label{summ}
\end{align}
To bound the sum over $R_0$ above, we consider the generating series and we have
\begin{align*}
& \sum_{R_0 \in \mathcal{M}} d_k(R_0^p) u^{\deg(R_0)} = \prod_Q \Big( 1 + \binom{p+k-1}{k-1} u^{\deg(Q)} + \sum_{j \geq 2} \binom{jp+k-1}{k-1} u^{j\deg(Q)}\Big) \\
&= \mathcal{Z}(u)^{\binom{p+k-1}{k-1}} \mathcal{F}(u),
\end{align*}
where $\mathcal{F}(u)$ is given by an Euler product which converges absolutely for $|u|<q^{-1/2}$. Using Perron's formula,  we get that
\begin{align*}
\sum_{\substack{R_0\in \mathcal{M}_{\leq \left \lfloor \frac{d}{p}\right \rfloor}}} d_k(R_0^p)= \frac{1}{2 \pi i} \oint_{|u|=q^{-1-\epsilon}} \frac{\mathcal{F}(u)}{(1-u)(1-qu)^{\binom{p+k-1}{k-1}} u^{\lfloor d/p\rfloor}}\, \frac{du}{u},
\end{align*}

 We shift the contour of integration to $|u| = q^{-1/2-\epsilon}$, and we encounter the pole at $u=1/q$. It then follows that 
 \begin{align}
\sum_{\substack{R_0\in \mathcal{M}_{\leq \left \lfloor \frac{d}{p}\right \rfloor}}} d_k(R_0^p) \ll q^{\frac{d}{p}} \left(\frac{d}{p}\right)^{\binom{p+k-1}{k-1}}. \label{sumr0}
\end{align}

For the sum over $m$ in \eqref{summ},  since $k<q^{1/2}$, we have
$$\sum_{m=1}^{d(k-1)} \frac{k^m}{q^{m/2}} \leq \frac{k}{q^{1/2}-k}.$$
Combining the equation above and \eqref{sumr0} and \eqref{summ}, it follows that 
\begin{align}
 \sum_{\substack{R_0\in \mathcal{M}_{\leq \left \lfloor \frac{d}{p}\right \rfloor}\\ R_0(0)\not =0}} & d_k(R_0^p) \sum_{d<n\leq kd} \frac{1}{q^{\frac{n}{2}}} \binom{k-1}{n-d-1} \binom{d+k-1}{k-1} \frac{d+k}{n-d} \ll (d+k)^k q^{\frac{d}{p}-\frac{d}{2}} d^{\binom{p+k-1}{k-1}},\label{secondet}
\end{align}
 and note that the implied constant above does depend on $q$ and $k$.
 
 Finally, we bound the second error term in \eqref{sk+1} and we have 
\begin{align*}
\ll \frac{q^{\frac{d}{2p}-\frac{d}{2}+2}}{(1-q^{-1/2})^k} & \sum_{d\leq n \leq kd} (k+2)^{n+d} q^{\frac{n}{2p}} (n+k-1)^{k-1}\ll \frac{q^{\frac{d}{2}( \frac{k+1}{p}-1)+2}}{(1-q^{-1/2})^k} (d+1)^k (k+2)^{d(k+1)} k^{k-1},
\end{align*}
and the implicit constant in the error term above is independent of $q$ and $k$. 
Note that this last error term dominates \eqref{secondet} for $d$ large enough.
Putting these together, it follows that
\begin{align*}S_{k,d+1} =& \alpha_k(0)\sum_{\substack{R_0\in \mathcal{M}_{\leq \left \lfloor \frac{d}{p}\right \rfloor}\\ R_0(0)\not =0}}
\frac{d_k(R_0^p)}{{|R_0|^{\frac{p}{2}}}}+q^{\left \lfloor \frac{d}{p}\right \rfloor}\sum_{d< n \leq kd}\frac{\alpha_k(n)}{q^{\frac{n}{2}}} \sum_{j=0}^{n-d-1} \binom{n-j+k-1}{k-1}\binom{k}{j} (-1)^j q^{n-d-j} \\
&+ O \Bigg( \frac{q^{\frac{d}{2}( \frac{k+1}{p}-1)+2}}{(1-q^{-1/2})^k}  (d+1)^k (k+2)^{d(k+1)}k^{k-1} \Bigg).
\end{align*}
\kommentar{\acom{
\begin{align*}
\mathcal{F}(u) &=  \prod_Q \Big( 1 + \binom{p+k-1}{k-1} u^{\deg(Q)} + \sum_{j \geq 2} \binom{jp+k-1}{k-1} u^{j\deg(Q)}\Big) \Big(1 - u^{\deg(Q)} \Big)^{\binom{p+k-1}{k-1}} \\
&= \prod_Q \Big( 1+ \Big(  \binom{2p+k-1}{k-1} - \binom{p+k-1}{k-1}^2+\binom{ \binom{p+k-1}{k-1}}{2}\Big) u ^{2\deg(Q)}+ \ldots\Big)
\end{align*}
 We then get that

}}

Similarly, 
\begin{align*}
S_{k,d}
=&\alpha_k(0)\sum_{\substack{R_0\in \mathcal{M}_{\leq \left \lfloor \frac{d}{p}\right \rfloor}\\ R_0(0)\not =0}}
\frac{d_k(R_0^p)}{{|R_0|^{\frac{p}{2}}}}+q^{\left \lfloor \frac{d}{p}\right \rfloor}\sum_{d\leq  n \leq kd}\frac{\alpha_k(n)}{q^{\frac{n}{2}}} \sum_{j=0}^{n-d} \binom{n-j+k-1}{k-1}\binom{k}{j} (-1)^j q^{n-d-j+1}
\\
&+O \Bigg(\frac{q^{\frac{d}{2}( \frac{k+1}{p}-1)+2}}{(1-q^{-1/2})^k}  (d+1)^k (k+2)^{d(k+1)} k^{k-1}\Bigg).
\end{align*}

Finally, 
 \begin{align}
 \frac{S_{k,d+1}}{q-1} -\frac{S_{k,d}}{q(q-1)}=&\frac{\alpha_k(0)}{q}\sum_{\substack{R_0\in \mathcal{M}_{\leq \left \lfloor \frac{d}{p}\right \rfloor}\\ R_0(0)\not =0}}
\frac{d_k(R_0^p)}{{|R_0|^{\frac{p}{2}}}}-\frac{q^{\left \lfloor \frac{d}{p}\right \rfloor }}{q-1} \sum_{d \leq n \leq kd} \frac{\alpha_k(n)}{q^{\frac{n}{2}}} \binom{d+k-1}{k-1} \binom{k}{n-d} (-1)^{n-d} \nonumber \\
& +O \Bigg(\frac{q^{\frac{d}{2}( \frac{k+1}{p}-1)+2}}{(1-q^{-1/2})^k}  (d+1)^k (k+2)^{d(k+1)}k^{k-1} \Bigg).\label{together}
\end{align}
We remark that the second term above is bounded by
\[
\ll q^{ \frac{d}{p}} \binom{d+k-1}{k-1}  \sum_{d \leq n \leq kd} \frac{1}{q^{\frac{n}{2}}}\binom{k}{n-d} \le q^{\frac{d}{p}} (d+k-1)^{k-1} k^{-d} \sum_{d \leq n \leq d+k} \frac{k^n}{q^{\frac{n}{2}}} \ll q^{\frac{d}{p}-\frac{d}{2}} (d+k-1)^{k-1},\]
where we used the fact that $k <q^{1/2}$. Note that this term is also dominated by the error term in \eqref{together}.

Hence
 we get that
\begin{align*}\frac{1}{|\mathcal{AS}_d^0|}\sum_{f\in \mathcal{AS}_d^0}\mathcal{L}\left(\frac{1}{\sqrt{q}},f,\psi\right)^k = & \frac{\alpha_k(0)}{q}\sum_{\substack{R\in \mathcal{M}_{\leq [\frac{d}{p}]}\\ R(0)\not =0}}
\frac{d_k(R^p)}{{|R|^{\frac{p}{2}}}} + O \Bigg(\frac{q^{ \frac{d}{2} \left( \frac{k+1}{p}-1\right)+2}}{(1-q^{-1/2})^k}  (d+1)^{k} (k+2)^{(k+1)d}k^{k-1}\Bigg).
\end{align*}
In the above, we extend the sum over $R$ to all monic $R$ with $R(0) \neq 0$ at the expense of an error term of size $q^{d(\frac{1}{p}-\frac{1}{2})} d^{\binom{p+k-1}{k-1}}$.  Note that this error term is dominated by the error term in the equation above. Hence we get that
\begin{align*}\frac{1}{|\mathcal{AS}_d^0|}\sum_{f\in \mathcal{AS}_d^0}\mathcal{L}\left(\frac{1}{\sqrt{q}},f,\psi\right)^k = & \frac{\alpha_k(0)}{q}\sum_{\substack{R\in \mathcal{M}\\ R(0)\not =0}}
\frac{d_k(R^p)}{{|R|^{\frac{p}{2}}}} + O \Bigg(\frac{q^{ \frac{d}{2} \left( \frac{k+1}{p}-1\right)+2}}{(1-q^{-1/2})^k}  (d+1)^{k} (k+2)^{(k+1)d}k^{k-1}\Bigg).
\end{align*}

Using an additive character sum to detect $p$th powers, we further write the main term above as
\begin{align*}\label{eq:refereewantsthis}
 \frac{\alpha_k(0)}{q}\sum_{\substack{R\in \mathcal{M}\\ R(0)\not =0}}
\frac{d_k(R^p)}{{|R|^{\frac{p}{2}}}} =  & \frac{1}{p} \sum_{\ell=0}^{p-1}\frac{1}{ \left( 1 -\frac{\xi_p^\ell}{\sqrt{q}} \right)^{k}}\prod_{P\not= x} \Bigg(\frac{1}{p}\sum_{\ell=0}^{p-1}\frac{1}{\left(1-\frac{\xi_p^\ell}{\sqrt{|P|}}\right)^k} \Bigg)\\
= & \prod_{P} \Bigg(\frac{1}{p}\sum_{\ell=0}^{p-1}\frac{1}{\left(1-\frac{\xi_p^\ell}{\sqrt{|P|}}\right)^k} \Bigg).
\end{align*}
Combining the two equations above finishes the proof of Theorem \ref{polyn_k} in the case $k>1$.

\end{proof}

\section{Proof of Theorem \ref{thm_absvalue}}
\label{section:proof2}
We first need to prove the following approximate functional equation.

\begin{lem}[Approximate Functional Equation]
\label{afe1}
For $f \in \as$ and $k \in \mathbb{N}$, we have
 \begin{align*}
 \left|\mathcal{L}\left(\frac{1}{\sqrt{q}},f,\psi\right) \right|^{2k} & =\sum_{\substack{F, H \in \mathcal{M}\\\deg(FH) \leq k(d-1)}} \frac{d_k(F)d_k(H) \psi_f(F) \overline{\psi_f}(H)}{\sqrt{|FH|}} \\
 &+ \sum_{\substack{F, H \in \mathcal{M}\\\deg(FH) \leq k(d-1)-1}} \frac{d_k(F)d_k(H) \psi_f(F) \overline{\psi_f}(H)}{\sqrt{|FH|}}.
 \end{align*}
\end{lem}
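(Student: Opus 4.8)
The plan is to deduce Lemma~\ref{afe1} from the functional equation~\eqref{fe} by a coefficient-reflection argument, exploiting that $u=1/\sqrt q$ is precisely the self-dual point of~\eqref{fe} (there $qu^2=1$ and $1/(qu)=u$).

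First I would reduce the claim to an identity for a single polynomial. Since $1/\sqrt q$ is real and $\overline{\psi_f(F)}=\overline{\psi_f}(F)$ for every $F$ (here $\overline{\psi_f}=(\overline\psi)_f$ as completely multiplicative functions), we have $\overline{\mathcal{L}(1/\sqrt q,f,\psi)}=\mathcal{L}(1/\sqrt q,f,\overline\psi)$, hence
\[
\left|\mathcal{L}\!\left(\tfrac1{\sqrt q},f,\psi\right)\right|^{2k}=\mathcal{L}\!\left(\tfrac1{\sqrt q},f,\psi\right)^{k}\,\mathcal{L}\!\left(\tfrac1{\sqrt q},f,\overline\psi\right)^{k}.
\]
By the Euler product and complete multiplicativity of $\psi_f$, $\mathcal{L}(u,f,\psi)^{k}=\sum_{F\in\mathcal{M}}d_k(F)\psi_f(F)u^{\deg F}$; and since $\mathcal{L}(u,f,\psi)$ is a polynomial of degree $d-1$ for $f\in\as$, this truncates to a polynomial of degree $M:=k(d-1)$. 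Multiplying the two factors and collecting by total degree, put
\[
\Phi(u):=\mathcal{L}(u,f,\psi)^{k}\mathcal{L}(u,f,\overline\psi)^{k}=\sum_{\ell=0}^{2M}b_\ell\,u^{\ell},\qquad
b_\ell:=\sum_{\substack{F,H\in\mathcal{M}\\ \deg(FH)=\ell}}d_k(F)d_k(H)\psi_f(F)\overline{\psi_f}(H),
\]
which is a polynomial of degree $2M$; the quantity we want equals $\Phi(1/\sqrt q)=\sum_{\ell=0}^{2M}\beta_\ell$, where $\beta_\ell:=b_\ell\,q^{-\ell/2}=\sum_{\deg(FH)=\ell}d_k(F)d_k(H)\psi_f(F)\overline{\psi_f}(H)/\sqrt{|FH|}$.

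Next I would establish a functional equation for $\Phi$. Applying \eqref{fe} with $\psi$ and then again with $\overline\psi$ to the resulting right-hand side shows $\mathcal{L}(u,f,\psi)=\epsilon_\psi(f)\epsilon_{\overline\psi}(f)\,\mathcal{L}(u,f,\psi)$, hence $\epsilon_\psi(f)\epsilon_{\overline\psi}(f)=1$; multiplying \eqref{fe} for $\psi$ by \eqref{fe} for $\overline\psi$ and raising the product to the $k$-th power then gives $\Phi(u)=(qu^2)^{M}\Phi(1/(qu))$. Comparing the coefficient of $u^{m}$ on the two sides yields $b_m=q^{m-M}b_{2M-m}$, equivalently $\beta_m=\beta_{2M-m}$ for all $0\le m\le 2M$: the normalized coefficients of $\Phi$ are symmetric about the central index $m=M$. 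Therefore
\[
\left|\mathcal{L}\!\left(\tfrac1{\sqrt q},f,\psi\right)\right|^{2k}=\Phi\!\left(\tfrac1{\sqrt q}\right)
=\sum_{\ell=0}^{M}\beta_\ell+\sum_{\ell=M+1}^{2M}\beta_\ell
=\sum_{\ell=0}^{M}\beta_\ell+\sum_{\ell=0}^{M-1}\beta_{2M-\ell}
=\sum_{\ell=0}^{M}\beta_\ell+\sum_{\ell=0}^{M-1}\beta_\ell ,
\]
and unwinding $\beta_\ell$ into a sum over pairs $(F,H)$ with $\deg(FH)=\ell$, together with $M=k(d-1)$, produces exactly the two displayed sums of the lemma.

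There is no essential obstacle — this is a routine manipulation of a polynomial identity — but three points deserve care. First, one must justify that $\mathcal{L}(u,f,\psi)^k$, a priori only a formal power series coming from the Euler product, is genuinely a polynomial of degree $k(d-1)$, so that $\Phi$ is an honest polynomial of degree $2k(d-1)$ and its evaluation at $u=1/\sqrt q$ (which lies outside the region of absolute convergence of the Euler product) is legitimate. Second, the asymmetry between the ranges $\deg(FH)\le k(d-1)$ and $\deg(FH)\le k(d-1)-1$ must be tracked carefully: it records the fact that under $\ell\mapsto 2M-\ell$ the central index $\ell=M$ lies in only one of the two halves. Third, one needs the root-number identity $\epsilon_\psi(f)\epsilon_{\overline\psi}(f)=1$ so that the functional equation for $\Phi$ carries no extraneous constant; alternatively, one may observe that $\Phi(u)=|\mathcal{L}(u,f,\psi)|^{2k}$ for real $u$, so $\Phi$ has real coefficients and the constant in its functional equation, being positive and of modulus $1$, must equal $1$.
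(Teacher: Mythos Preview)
Your proof is correct and follows essentially the same route as the paper's: derive a functional equation $\Phi(u)=(qu^2)^{M}\Phi(1/(qu))$ for the product $\Phi(u)=\mathcal{L}(u,f,\psi)^k\mathcal{L}(u,f,\overline\psi)^k$, read off the coefficient symmetry $\beta_m=\beta_{2M-m}$, and split the sum at the midpoint. The only cosmetic difference is that the paper disposes of the root number by taking absolute values in \eqref{fe} (using $|\epsilon(f)|=1$), whereas you prove $\epsilon_\psi(f)\epsilon_{\overline\psi}(f)=1$ by iterating the functional equation; since the $b_\ell$ are real (swap $F\leftrightarrow H$), the paper's relation $a(n)=q^{n-M}\overline{a(2M-n)}$ and your $\beta_m=\beta_{2M-m}$ are equivalent.
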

\begin{proof}
Using \eqref{fe}, we have
$$ \Big|\mathcal{L}(u,f,\psi) \Big|^{2k}= (qu^2)^{k(d-1)} \Big| \mathcal{L} \Big( \frac{1}{qu},f,\overline{\psi} \Big) \Big|^{2k}.$$
Now
$$\Big|\mathcal{L}(u,f,\psi) \Big|^{2k} = \sum_{n=0}^{2k(d-1)} u^n \sum_{\substack{F, H \in \mathcal{M}\\\deg(FH) = n} } d_k(F) d_k(H)\psi_f(F) \overline{\psi_f}(H) =\sum_{n=0}^{2k(d-1)} a(n)u^n.  $$
From the functional equation above, we get that
$$a(n) = q^{n-k(d-1)} \overline{a(2k(d-1)-n)}.$$
Using this and plugging in $u=\frac{1}{\sqrt{q}}$, we finish the proof. 
\end{proof}

The following result allows us to compute averages of $\psi_f(F)$ with $f$ varying over the family 
$\mathcal{AS}_d^0$. Let 
\[\langle \psi_f(F)\rangle_d=\frac{1}{|\mathcal{AS}_d^0|} \sum_{f\in \mathcal{AS}_d^0} \psi_f(F).\]

\begin{lem} \label{lemma} Let $P_1, \dots, P_s$ be distinct monic irreducible polynomials in $\F_q[x]$ such that $\deg(P_1)+\cdots +\deg(P_s)< d$, and $h_1,\dots,h_s$ integers. Then
 \[\langle \psi_f(P_1)^{h_1}\dots \psi_f(P_s)^{h_s} \rangle_d= \left\{ \begin{array}{ll} 1 & \mbox{if $p \mid h_i$ for $1 \leq i \leq s$}, \\
0 & \mbox{otherwise}. \end{array} \right.\]
\end{lem}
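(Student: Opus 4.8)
The plan is to reduce the average to an explicit orthogonality computation over the free coefficients of $f$, and then to show that the surviving arithmetic condition forces $p\mid h_i$ for every $i$. First note that each $\psi_f(P_i)$ is a $p$-th root of unity, so $\psi_f(P_i)^{h_i}$ depends only on $h_i\bmod p$; if $p\mid h_i$ for all $i$, the product is identically $1$ and the average is $1$. Assume henceforth that $p\nmid h_{i_0}$ for some $i_0$; the goal is to show the average vanishes. Using $\tr_{q^n/p}=\tr_{q/p}\circ\tr_{q^n/q}$ and $\tr_{q^n/q}(f(\alpha))=\sum_{P_i(\beta)=0}f(\beta)$, we may write $\psi_f(P_i)=\psi\bigl(\tr_{q/p}(\sum_{P_i(\beta)=0}f(\beta))\bigr)$. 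Writing $f(x)=\sum_j a_jx^j$, introducing the power sums $\pi_j(P_i):=\sum_{P_i(\beta)=0}\beta^j\in\F_q$ and setting $c_j:=\sum_i h_i\pi_j(P_i)\in\F_q$, bilinearity of the trace gives
\[\prod_i\psi_f(P_i)^{h_i}=\prod_{j\in J}\psi\bigl(\tr_{q/p}(a_jc_j)\bigr),\qquad J:=\{0\}\cup\{1\le j\le d:\ p\nmid j\},\]
the remaining coefficients $a_j$ being forced to $0$ and hence contributing trivially. Averaging over $f\in\as$, where each $a_j$ ($j\in J$) runs over $\F_q$ except that $a_d$ runs over $\F_q^{*}$, and using $\sum_{a\in\F_q}\psi(\tr_{q/p}(ac))=q\cdot\mathbf 1_{c=0}$ together with $\sum_{a\in\F_q^{*}}\psi(\tr_{q/p}(ac))=q\,\mathbf 1_{c=0}-\mathbf 1_{c\neq0}$, one obtains
\[\langle \psi_f(P_1)^{h_1}\cdots\psi_f(P_s)^{h_s}\rangle_d=\Bigl(\prod_{j\in J\setminus\{d\}}\mathbf 1_{c_j=0}\Bigr)\Bigl(\mathbf 1_{c_d=0}-\tfrac{1}{q-1}\mathbf 1_{c_d\neq0}\Bigr).\]
In particular the average vanishes unless $c_j=0$ for all $j\in J\setminus\{d\}$.

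It therefore suffices to prove that $c_j=0$ for all $j\in J\setminus\{d\}$ implies $p\mid h_i$ for every $i$ (this also forces $c_d=0$, so the spurious $-1/(q-1)$ never actually occurs). I would argue in three steps. First, in characteristic $p$ one has $\pi_{pm}(P_i)=\pi_m(P_i)^p$, hence $c_{pm}=c_m^{\,p}$; writing an arbitrary $m$ with $0\le m\le d-1$ as $m=p^a m'$ with $p\nmid m'$, so that $m'\in J\setminus\{d\}$ (the case $m'=0$ being covered by $c_0=\sum_i h_i\deg P_i=0$), we get $c_m=c_{m'}^{\,p^a}=0$ for all $0\le m\le d-1$. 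Second, expanding at infinity gives $\sum_i h_i\,\frac{P_i'(x)}{P_i(x)}=\sum_{m\ge0}c_m x^{-m-1}$; since $N:=\deg(P_1\cdots P_s)\le d$, the first $N$ coefficients vanish, so $\sum_i h_i\,\frac{P_i'}{P_i}=O(x^{-N-1})$. But this rational function equals $Q/(P_1\cdots P_s)$ with $\deg Q\le N-1<N$, and a nonzero rational function of that form cannot decay like $x^{-N-1}$; hence $\sum_i h_i\,\frac{P_i'}{P_i}=0$, i.e.\ $\sum_i h_i P_i'\prod_{l\neq i}P_l=0$ in $\F_q[x]$. Third, reducing this identity modulo $P_{i_0}$ kills every term with $i\neq i_0$, leaving $h_{i_0}P_{i_0}'\prod_{l\neq i_0}P_l\equiv0\pmod{P_{i_0}}$; since $\F_q$ is perfect, $P_{i_0}$ is separable, hence coprime to $P_{i_0}'$, and it is coprime to each $P_l$ with $l\neq i_0$, so $P_{i_0}\mid h_{i_0}$ in $\F_q[x]$, forcing $h_{i_0}=0$ in $\F_q$, i.e.\ $p\mid h_{i_0}$. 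As $i_0$ was arbitrary, $p\mid h_i$ for all $i$, contradicting our assumption.

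The step I expect to be the main obstacle is the last one: because of the constraint $a_d\neq0$ in the definition of $\as$, naive orthogonality leaves the spurious term $-\tfrac{1}{q-1}\mathbf 1_{c_d\neq0}$, and eliminating it requires the genuinely arithmetic input $\deg(P_1\cdots P_s)\le d$ — this is precisely where the hypothesis on the degrees is used, through the pole-counting argument of step two followed by the residue computation of step three. (Alternatively, as noted in the introduction, the lemma can be deduced directly from \cite[Lemma 9.1]{BDFLS12}.)
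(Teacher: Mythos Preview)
Your argument is correct. The paper itself gives no proof of this lemma beyond the one--line citation ``This is a simple case of \cite[Lemma~9.1]{BDFLS12}'', so your write--up is a genuinely self--contained alternative. The reduction to the character--sum orthogonality over the free coefficients $a_j$, $j\in J$, is exactly the right move, and the three--step argument (Frobenius to fill in the $p$--divisible indices, the logarithmic--derivative expansion $\sum_i h_i P_i'/P_i=\sum_{m\ge0} c_m x^{-m-1}$ together with the degree bound $N\le d$, and the residue computation modulo $P_{i_0}$) cleanly isolates where the hypothesis $\sum_i\deg P_i\le d$ is used.

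Two small remarks. First, there is a typo in the intermediate identity: one has $\sum_{a\in\F_q^{*}}\psi(\tr_{q/p}(ac))=q\,\mathbf 1_{c=0}-1=(q-1)\mathbf 1_{c=0}-\mathbf 1_{c\ne0}$, not $q\,\mathbf 1_{c=0}-\mathbf 1_{c\ne0}$; your displayed formula for the average (with the factor $\mathbf 1_{c_d=0}-\tfrac{1}{q-1}\mathbf 1_{c_d\ne0}$) is nonetheless correct. Second, the parenthetical ``the case $m'=0$'' in Step~1 is vacuous, since $m\ge1$ forces $m'\ge1$; this does no harm. Your proof is in fact essentially the argument underlying \cite[Lemma~9.1]{BDFLS12}, so the two approaches agree in substance, with yours having the advantage of being explicit about the role of the degree hypothesis and of the separability of the $P_i$ in characteristic $p$.
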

\begin{proof}This is a simple case of \cite[Lemma 9.1]{BDFLS12}.
\end{proof}

We are now ready to begin the proof of Theorem \ref{thm_absvalue}. 

\begin{proof}[Proof of Theorem \ref{thm_absvalue}]

Using Lemma \ref{afe1} for $k=1$, we have  
\begin{align*}&\frac{1}{|\mathcal{AS}_d^0|}\sum_{f\in \mathcal{AS}_d^0}\left|\mathcal{L}\left(\frac{1}{\sqrt{q}},f,\psi\right)\right|^2  \\
=&\frac{1}{|\mathcal{AS}_d^0|}\sum_{f\in \mathcal{AS}_d^0}\sum_{\substack{F, H \in \mathcal{M}\\\deg(FH)\leq d-1}} \frac{ \psi_{f}(F) \overline{\psi_{f}}(H)}{\sqrt{|FH|}}+\frac{1}{|\mathcal{AS}_d^0|}\sum_{f\in \mathcal{AS}_d^0}\sum_{\substack{F, H \in \mathcal{M}\\\deg(FH) \leq d-2}} \frac{ \psi_{f}(F) \overline{\psi_{f}}(H)}{\sqrt{|FH|}}\\
=&\sum_{\substack{F, H \in \mathcal{M}\\\deg(FH)\leq d-1}} \frac{ \langle \psi_{f}(F/H)\rangle_d}{\sqrt{|FH|}}+\sum_{\substack{F, H \in \mathcal{M}\\\deg(FH) \leq d-2}} \frac{\langle \psi_{f}(F/H)\rangle_d}{\sqrt{|FH|}}.
\end{align*}
Thus consider the general sum 
\[S(n) := \sum_{\substack{F, H \in \mathcal{M}\\ \deg(FH)\leq n}} \frac{ \langle \psi_{f}(F/H)\rangle_d}{\sqrt{|FH|}}.\]
By Lemma \ref{lemma}, $\langle \psi_f(F/H)\rangle_d$ is trivial unless $F/H$ is a $p^{\text{th}}$-power. Write $R=(F,H)$ and $F=F_1R$, $H=H_1R$ so that $(F_1,H_1)=1$. 
 Thus $F/H=F_1/H_1$ and we must have $F_1=F_0^p$, $H_1=H_0^p$.  We then have to evaluate the term
$$S(n) = \sum_{\substack{R \in \mathcal{M}_{\leq \frac{n}{2}}}}
 \frac{1}{|R|} \sum_{\substack{F_0, H_0\in \mathcal{M}\\
 p \deg(F_0H_0) \leq n-2\deg(R) \\ (F_0,H_0)=1}} \frac{1}{|F_0H_0|^{\frac{p}{2}}}.$$
First we consider the inner sum. Its generating series is given by 
$$\mathcal{F}(u,v) = \sum_{\substack{F_0, H_0 \in \mathcal{M}\\(F_0,H_0)=1}} \frac{ u^{p\deg(F_0)} v^{p\deg(H_0)}}{|F_0H_0|^{\frac{p}{2}}}.$$ Then we have 
$$ \mathcal{F}(u,v) = \prod_Q \Big(  1+ \sum_{j=1}^{\infty} \frac{u^{pj \deg(Q)}}{|Q|^{\frac{pj}{2}}} + \sum_{j=1}^{\infty} \frac{v^{pj \deg(Q)}}{|Q|^{\frac{pj}{2}}}\Big)=\frac{\mathcal{Z} \left(u^p/q^{\frac{p}{2}} \right)   \mathcal{Z} \left(v^p/q^{\frac{p}{2}} \right)}{  \mathcal{Z} \left(u^pv^p/q^{p} \right)}=\frac{1-q^{1-p}u^pv^p}{(1-q^{1-\frac{p}{2}}u^p)(1-q^{1-\frac{p}{2}}v^p)}.$$ 
Now using Perron's formula \eqref{perron} for the sums over $F_0$ and $H_0$ we get that
\begin{align*}
\sum_{\substack{F_0, H_0 \in \mathcal{M}\\ p \deg(F_0H_0) \leq n-2\deg(R) \\ (F_0,H_0)=1}} \frac{1}{|F_0H_0|^{\frac{p}{2}}}= \frac{1}{(2 \pi i)^2} \oint \oint \frac{\mathcal{F}(u,uv)}{(1-u)(1-v)(uv)^{n-2\deg(R)}} \, \frac{du}{u} \, \frac{dv}{v},
\end{align*}
where the integral takes place over small circles around the origin. 

Introducing the sum over $R$ as well and using Perron's formula \eqref{perron}, we get that 
\begin{align*}
 & S(n)= \frac{1}{(2 \pi i)^3} \oint \oint \oint \frac{\mathcal{F}(u,uv)\mathcal{Z} (u^2v^2z^2/q)}{(1-u)(1-v) (1-z)(uvz)^{n} } \, \frac{dz}{z} \, \frac{du}{u} \, \frac{dv}{v} \\ 
=& \frac{1}{(2 \pi i)^3} \oint \oint \oint  \frac{1-q^{1-p}u^{2p}v^p}{(1-q^{1-\frac{p}{2}}u^p)(1-q^{1-\frac{p}{2}}u^pv^p)(1-u^2v^2z^2)(1-u)(1-v)(1-z) (uvz)^{n} } \, \frac{dz}{z} \, \frac{du}{u} \, \frac{dv}{v},
\end{align*}
where the integral takes place over small circles around the origin. 
Since we need to consider 
\begin{align*}
 S(d-1)+S(d-2),
 \end{align*}
 we will sum the integral expressions for $S(n)$ and $S(n-1)$ and later set $n=d-1$.  Thus, we get
 \begin{align*}
& S(n)+S(n-1)=\\
 &  \frac{1}{(2 \pi i)^3} \oint \oint \oint  \frac{1-q^{1-p}u^{2p}v^p}{ (1-q^{1-\frac{p}{2}}u^p)(1-q^{1-\frac{p}{2}}u^pv^p)(1-uvz)(1-u)(1-v)(1-z) (uvz)^{n}} \, \frac{dz}{z} \, \frac{du}{u} \, \frac{dv}{v}.
 \end{align*}
In the integral above, we can choose the contour to be $|u|=|v|=|z|=q^{-\epsilon}$. In the integral over $z$, we shift the contour of integration to  $|z|=\rho$ and $\rho \to \infty$. Then the integral over $z$ is given by the residues at $z=1$ and $z=\frac{1}{uv}$. 

We write
$$S(n)+S(n-1) = A+B,$$ where $A$ corresponds to the residue at $z=1$ and $B$ corresponds to the residue at $z=1/(uv)$. We have that 
\begin{align*}
A= \frac{1}{(2\pi i)^2} \oint \oint  \frac{1-q^{1-p}u^{2p}v^p}{ (1-q^{1-\frac{p}{2}}u^p)(1-q^{1-\frac{p}{2}}u^pv^p)(1-uv)(1-u)(1-v) (uv)^{n}}  \, \frac{du}{u} \, \frac{dv}{v},
\end{align*}
and 
\begin{align*}
B &= 
\frac{1}{(2 \pi i)^2} \oint  \oint  \frac{1-q^{1-p}u^{2p}v^p }{(1-q^{1-\frac{p}{2}}u^p)(1-q^{1-\frac{p}{2}}u^pv^p)(uv-1)(1-u)(1-v) }  \, du \, dv.
\end{align*}
Note that in the integral for $B$, there are no poles of the integrand inside the contour of integration, so $B=0$. Hence we have 
$$S(n)+S(n-1)=A.$$
In the expression for $A$, we shift the contour over $u$ to $|u|=\rho$ and let $\rho \to \infty$. We encounter poles when $u=1, u=\frac{1}{v}$, $u^pv^p=q^{\frac{p}{2}-1}$  and $u^p = q^{\frac{p}{2}-1}$. Then we have poles at $u=1, u=\frac{1}{v} , u = q^{\frac{1}{2}-\frac{1}{p}} \xi_p^j, u = q^{\frac{1}{2}-\frac{1}{p}} \xi_p^jv^{-1}$, for $j=0,\ldots, p-1$. Thus,  we  have that 
$$S(n)+S(n-1) = A_1+A_{v^{-1}}+\sum_{j=0}^{p-1} \Big(A_{ \xi_p^j}+A_{ \xi_p^jv^{-1}} \Big),$$
where $A_{1}, A_{v^{-1}}$ are the negatives of the residues at $u=1, u=\frac{1}{v}$ respectively, and $A_{ \xi_p^j}, A_{ \xi_p^jv^{-1}}$ are the negatives of the residues at $u = \xi_p^jq^{\frac{1}{2}-\frac{1}{p}} , u = \xi_p^jq^{\frac{1}{2}-\frac{1}{p}} v^{-1}$ respectively.
We have that
$$A_1 = \frac{1}{2 \pi i} \oint \frac{1-q^{1-p}v^p}{(1-q^{1-\frac{p}{2}})(1-q^{1-\frac{p}{2}}v^p)(1-v)^2 v^{n} }  \, \frac{dv}{v}.$$
Now we have a double pole at $v=1$ and poles at $v=\xi_p^j q^{\frac{1}{2}-\frac{1}{p}}$ for $j=0,\ldots, p-1$. We write
$$A_1 = A_{1,1}+\sum_{j=0}^{p-1} A_{1,\xi_p^j},$$ where $A_{1,1}$ corresponds to the pole at $v=1$, and $A_{1,\xi_p^j}$ corresponds to the pole at $v = \xi_p^j q^{\frac{1}{2}-\frac{1}{p}}$. We have
\begin{align*}
A_{1,1} 
=&\frac{(1-q^{1-p})(n+1)}{(1-q^{1-\frac{p}{2}})^2}-
\frac{pq^{1-\frac{p}{2}}(1-q^{-\frac{p}{2}})}{(1-q^{1-\frac{p}{2}})^3},
\end{align*}
and
 \begin{align*}
A_{1,\xi_p^j} = & \frac{1-q^{-\frac{p}{2}}}{(1-\xi_p^j q^{\frac{1}{2}-\frac{1}{p}})^2 \xi_p^{nj} q^{n\left(\frac{1}{2}-\frac{1}{p}\right)} (1-q^{1-\frac{p}{2}})\prod_{\ell\not = j} (1-\xi_p^{j-\ell})} \\
=& \frac{1-q^{-\frac{p}{2}}}{(1-\xi_p^j q^{\frac{1}{2}-\frac{1}{p}})^2 \xi_p^{nj} q^{n\left(\frac{1}{2}-\frac{1}{p}\right)} (1-q^{1-\frac{p}{2}})p}.
\end{align*}
The sum over these residues gives 
\begin{align*}
\sum_{j=0}^{p-1} A_{1,\xi_p^j} = &q^{n\left(\frac{1}{p}-\frac{1}{2}\right)} \frac{1-q^{-\frac{p}{2}}}{p (1-q^{1-\frac{p}{2}})}\sum_{j=0}^{p-1} \frac{1}{(1-\xi_p^j q^{\frac{1}{2}-\frac{1}{p}})^2\xi_p^{nj}}\\
=&q^{n\left(\frac{1}{p}-\frac{1}{2}\right)} \frac{1-q^{-\frac{p}{2}}}{p (1-q^{1-\frac{p}{2}})} S_2(n,q^{\frac{1}{2}-\frac{1}{p}}).
 \end{align*}

We now compute $A_{v^{-1}}$, the negative of the residue of the pole in $A$ coming from $u= \frac{1}{v}$.  We have that
\begin{align*}
A_{v^{-1}} =&-\frac{1}{(2 \pi i)}   \oint  \frac{v^p-q^{1-p} }{ (1-q^{1-\frac{p}{2}})(v^p-q^{1-\frac{p}{2}})(1-v)^2} \, dv.
\end{align*}
In the expression above, we make the change of variables $v \mapsto 1/v$. We get that
\begin{align*}
A_{v^{-1}} =& - \frac{1}{(2 \pi i)}   \oint  \frac{1-q^{1-p} v^p}{ (1-q^{1-\frac{p}{2}})(1-q^{1-\frac{p}{2}}v^p) (1-v)^2} \, dv.
\end{align*}
Recall that we are now integrating over $|v|=q^{\epsilon}$. Hence the integral is equal to the residue of the pole at $v=1$. This gives
\begin{align*}
A_{v^{-1}} &= -\frac{pq^{1-\frac{p}{2}} (1-q^{-\frac{p}{2}})}{  (1-q^{1-\frac{p}{2}})^3}.
\end{align*}

We consider the negatives of the residues at $u=\xi_p^j q^{\frac{1}{2}-\frac{1}{p}}$ and at $u =\xi_p^j q^{\frac{1}{2}-\frac{1}{p}}v^{-1}$. For  $u=\xi_p^j q^{\frac{1}{2}-\frac{1}{p}}$ we get 
\begin{align*}
 A_{\xi_p^j}=\frac{1}{2\pi i} \oint  \frac{(1-q^{-1}v^p)v^{-n}}{p(1-\xi_p^j q^{\frac{1}{2}-\frac{1}{p}})(1-\xi_p^j q^{\frac{1}{2}-\frac{1}{p}}v)(1-v^p)(1-v) (\xi_p^j q^{\frac{1}{2}-\frac{1}{p}})^{n}}  \, \frac{dv}{v}.
\end{align*}
In the above, we shift the contour of integration to $|v|=\rho$ and let $\rho \to \infty$. We encounter poles at $v= \xi_p^k$ for $k=0,\ldots, p-1$ (a double pole at $v=1$ and simple poles at $v= \xi_p^k$ and $k=1,\ldots, p-1$). We then have that 
$$A_{\xi_p^j} = \sum_{k=0}^{p-1} A_{\xi_p^j,\xi_p^k},$$
where $A_{\xi_p^j,\xi_p^k}$ corresponds to the residue at $v=\xi_p^k$.
Computing the residue at $v = \xi_p^k$ for $k=1,\ldots, p-1$ and $v=1$ we get
\begin{align*}
A_{\xi_p^j} &= \sum_{k=1}^{p-1}  \frac{(1-q^{-1})q^{n\left(\frac{1}{p}-\frac{1}{2}\right)}}{p^2(1-\xi_p^{j+k} q^{\frac{1}{2}-\frac{1}{p}})(1-\xi_p^j q^{\frac{1}{2}-\frac{1}{p}})(1-\xi_p^k) (\xi_p^{k+j})^{n}} + \frac{q^{n\left(\frac{1}{p}-\frac{1}{2}\right)}}{p(1-\xi_p^j q^{\frac{1}{2}-\frac{1}{p}})(\xi_p^{j})^{n}} \Bigg(\frac{n (q-1
)}{pq (1-\xi_p^j q^{\frac{1}{2}-\frac{1}{p}})} \\
&+ \frac{p+q-1+pq- \xi_p^j  q^{\frac{1}{2}-\frac{1}{p}}(pq+3q+p-3)}{2pq(1-\xi_p^j q^{\frac{1}{2}-\frac{1}{p}})^2 } \Bigg).
\end{align*}
Now we want to sum over $j=0,\ldots, p-1$ and then over $k=1,\ldots, p-1$. Notice that
\begin{align*}
  & \sum_{j=0}^{p-1} \sum_{k=1}^{p-1}  \frac{\xi_p^{-n(k+j)}}{(1-\xi_p^{j+k} q^{\frac{1}{2}-\frac{1}{p}})(1-\xi_p^j q^{\frac{1}{2}-\frac{1}{p}})(1-\xi_p^k) }\\=&\sum_{j=0}^{p-1} \sum_{k=1}^{p-1}  \frac{\xi_p^{-n(k+j)-k}}{(1-\xi_p^j q^{\frac{1}{2}-\frac{1}{p}})^2(1-\xi_p^k) }-\frac{\xi_p^{-n(k+j)-k}}{(1-\xi_p^j q^{\frac{1}{2}-\frac{1}{p}})^2(1-\xi_p^{j+k} q^{\frac{1}{2}-\frac{1}{p}})}\\
=&\sum_{j=0}^{p-1} \frac{\xi_p^{-nj}}{(1-\xi_p^j q^{\frac{1}{2}-\frac{1}{p}})^2}\sum_{k=1}^{p-1}  \frac{\xi_p^{-(n+1)k}}{(1-\xi_p^k) }-\sum_{j=0}^{p-1} \sum_{k=0}^{p-1}\frac{\xi_p^{-(n+1)(k+j)+j}}{(1-\xi_p^j q^{\frac{1}{2}-\frac{1}{p}})^2(1-\xi_p^{j+k} q^{\frac{1}{2}-\frac{1}{p}})}\\
&+\sum_{j=0}^{p-1} \frac{\xi_p^{-nj}}{(1-\xi_p^j q^{\frac{1}{2}-\frac{1}{p}})^3}\\
=&\sum_{j=0}^{p-1} \frac{\xi_p^{-nj}}{(1-\xi_p^j q^{\frac{1}{2}-\frac{1}{p}})^2}\sum_{k=1}^{p-1}  \frac{\xi_p^{-(n+1)k}}{(1-\xi_p^k) }-\sum_{j=0}^{p-1} \frac{\xi_p^j}{(1-\xi_p^j q^{\frac{1}{2}-\frac{1}{p}})^2}\sum_{\ell=0}^{p-1}\frac{\xi_p^{-(n+1)\ell}}{(1-\xi_p^{\ell} q^{\frac{1}{2}-\frac{1}{p}})}\\
&+\sum_{j=0}^{p-1} \frac{\xi_p^{-nj}}{(1-\xi_p^j q^{\frac{1}{2}-\frac{1}{p}})^3}\\
=&S_2(n,q^{\frac{1}{2}-\frac{1}{p}})\sum_{k=1}^{p-1}  \frac{\xi_p^{-(n+1)k}}{(1-\xi_p^k) } -S_2(-1,q^{\frac{1}{2}-\frac{1}{p}})S_1(n+1,q^{\frac{1}{2}-\frac{1}{p}})+S_3(n,q^{\frac{1}{2}-\frac{1}{p}}).
\end{align*}
We also have 
\begin{align*}
&\sum_{j=0}^{p-1} \frac{\xi_p^{-nj}(pq+p+q-1- \xi_p^j  q^{\frac{1}{2}-\frac{1}{p}}(pq+3q+p-3))}{2pq(1-\xi_p^j q^{\frac{1}{2}-\frac{1}{p}})^3 }\\
=&\sum_{j=0}^{p-1} \frac{\xi_p^{-nj}(pq+p+q-1)}{2pq(1-\xi_p^j q^{\frac{1}{2}-\frac{1}{p}})^2 }
-\frac{ \xi_p^{(1-n)j}  q^{\frac{1}{2}-\frac{1}{p}}(q-1)}{pq(1-\xi_p^j q^{\frac{1}{2}-\frac{1}{p}})^3 }.
\end{align*}

Putting the above together, we have 
\begin{align*}
 \sum_{j=0}^{p-1} A_{\xi_p^j}  =&\frac{q^{n\left(\frac{1}{p}-\frac{1}{2} \right)}(q-1)}{p^2q}\Big[S_2(n,q^{\frac{1}{2}-\frac{1}{p}})\left( \frac{p-1}{2}-[n+1]_p\right)-S_2(-1,q^{\frac{1}{2}-\frac{1}{p}})S_1(n+1,q^{\frac{1}{2}-\frac{1}{p}})\\&+S_3(n,q^{\frac{1}{2}-\frac{1}{p}}) \Big]
 +\frac{q^{n\left(\frac{1}{p}-\frac{1}{2}\right)}}{p} \Big[\frac{n (q-1
)}{pq}S_2(n,q^{\frac{1}{2}-\frac{1}{p}})+\frac{pq+p+q-1}{2pq}S_2(n,q^{\frac{1}{2}-\frac{1}{p}})\\&-\frac{ q^{\frac{1}{2}-\frac{1}{p}}(q-1)}{pq}S_3(n-1,q^{\frac{1}{2}-\frac{1}{p}})
\Big]\\
=&\frac{q^{n\left(\frac{1}{p}-\frac{1}{2} \right)}}{p}S_2(n,q^{\frac{1}{2}-\frac{1}{p}})+\frac{q^{n\left(\frac{1}{p}-\frac{1}{2} \right)}(q-1)}{p^2q}S_2(n,q^{\frac{1}{2}-\frac{1}{p}})(n-[n+1]_p)\\&-\frac{q^{n\left(\frac{1}{p}-\frac{1}{2} \right)}(q-1)}{p^2q}S_2(-1,q^{\frac{1}{2}-\frac{1}{p}})S_1(n+1,q^{\frac{1}{2}-\frac{1}{p}})+\frac{q^{n\left(\frac{1}{p}-\frac{1}{2} \right)}(q-1)}{p^2q}S_3(n,q^{\frac{1}{2}-\frac{1}{p}})\\&
 -\frac{q^{(n-1)\left(\frac{1}{p}-\frac{1}{2}\right)}(q-1)}{p^2q}S_3(n-1,q^{\frac{1}{2}-\frac{1}{p}}).
\end{align*}
For $u =\xi_p^j q^{\frac{1}{2}-\frac{1}{p}}v^{-1}$ we get 
\begin{align*}
A_{\xi_p^jv^{-1}}= - \frac{1}{2\pi i} \oint  \frac{ v^p-q^{-1} }{p(1-\xi_p^j q^{\frac{1}{2}-\frac{1}{p}})(v-\xi_p^j q^{\frac{1}{2}-\frac{1}{p}}) (v^p-1)(1-v) (\xi_p^j q^{\frac{1}{2}-\frac{1}{p}})^{n}
} dv.
\end{align*}
Note that there is no pole of the integrand inside the contour of integration with $|v|=q^{-\epsilon}$, so this integral is equal to $0$.

Finally, putting all the non-zero residues together and setting $n+1=d$ gives 
\begin{align*}
&S(d-1)+S(d-2) = A_{1,1}+A_{v^{-1}}+\sum_{j=0}^{p-1} A_{1,\xi_p^j}+\sum_{j=0}^{p-1}\sum_{k=1}^{p-1} A_{\xi_p^j,\xi_p^k}\\ 
=&\frac{d(1-q^{1-p})}{(1-q^{1-\frac{p}{2}})^2} -\frac{2pq^{1-\frac{p}{2}} (1-q^{-\frac{p}{2}})}{  (1-q^{1-\frac{p}{2}})^3}+ \frac{q^{(d-1)\left(\frac{1}{p}-\frac{1}{2}\right)}(1-q^{-\frac{p}{2}})}{p (1-q^{1-\frac{p}{2}})} S_2(d-1,q^{\frac{1}{2}-\frac{1}{p}})\\
&+\frac{q^{(d-1)\left(\frac{1}{p}-\frac{1}{2} \right)}}{p}S_2(d-1,q^{\frac{1}{2}-\frac{1}{p}})+\frac{q^{(d-1)\left(\frac{1}{p}-\frac{1}{2} \right)}(q-1)}{p^2q}S_2(d-1,q^{\frac{1}{2}-\frac{1}{p}})(d-1-[d]_p)\\&-\frac{q^{(d-1)\left(\frac{1}{p}-\frac{1}{2} \right)}(q-1)}{p^2q}S_2(-1,q^{\frac{1}{2}-\frac{1}{p}})S_1(d,q^{\frac{1}{2}-\frac{1}{p}})+\frac{q^{(d-1)\left(\frac{1}{p}-\frac{1}{2} \right)}(q-1)}{p^2q}S_3(d-1,q^{\frac{1}{2}-\frac{1}{p}})\\&
 -\frac{q^{(d-2)\left(\frac{1}{p}-\frac{1}{2}\right)}(q-1)}{p^2q}S_3(d-2,q^{\frac{1}{2}-\frac{1}{p}}).
\end{align*}

\end{proof}

\section{Agreement with the Random Matrix Models}
\label{section:rmt}
Here, we show that the asymptotic formula in Theorem \ref{thm_absvalue} agrees with the conjectured asymptotic formula for the moments in a family with unitary symmetry. We note that considering the moment with absolute value, as in Theorem \ref{thm_absvalue}, is the more standard moment to consider for a family with expected unitary symmetry.

Starting with the observation of Montgomery and Dyson 
that the zeros of $\zeta(s)$ seem to obey the same distribution patterns as the eigenvalues of large random unitary matrices, random matrix models have been given for families of $L$-functions and have been instrumental in the formulation of conjectures in number theory. Associating a random matrix group to each family of $L$-functions as suggested by the work of Katz and Sarnak \cite{KS}, Keating and Snaith \cite{KS2, Keating-Snaith} used random matrix theory computations to conjecture formulas for moments in families of $L$-functions. 

We reproduce here the expected conjectures in the cases of families of $L$-functions with unitary symmetry. 

Let $\mathcal{F}$ denote a family of $L$-functions. For $f \in \mathcal{F}$, let $c(f)$ denote the conductor of the $L$-function associated to $f$, denoted by $L_f(s)$. Let
$$X^{*} = \Big|  \Big\{ f \in \mathcal{F} \,:  \, c(f) \leq X \Big\}\Big|.$$

In the case of a unitary family, the Keating--Snaith conjecture states the following.
\begin{conj}
\label{unitary}
For  a family $\mathcal{F}$ of $L$-functions with \textit{unitary} symmetry and $k$ a positive integer,
$$ \frac{1}{X^{*}} \sum_{\substack{f \in \mathcal{F} \\ c(f) \leq X}}|L_f(1/2)|^{2k}  \sim a(k) g_{\text{U}}(k) (\log X)^{k^2}, $$
where $a(k)$ is an arithmetic factor depending on the specific family considered, and where
$$g_{\text{U}}(k) = \prod_{j=0}^{k-1} \frac{j!}{(j+k)!}.$$
\end{conj}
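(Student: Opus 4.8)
The plan is to prove Conjecture \ref{unitary} in the function field families that carry the unitary symmetry type studied here --- the polynomial family $\as$ and the ordinary family $\asord$, for which the conductor $X$ of $\mathcal{L}(u,f,\psi)$ satisfies $\log X \asymp d$. I would proceed along two complementary routes, according to whether the base field $\F_q$ is allowed to grow or is held fixed; in both cases the goal is to produce, for the $2k$-th moment of $\mathcal{L}(q^{-1/2},f,\psi)$, a main term of the shape $a(k)\, g_{\text{U}}(k)\,(\log X)^{k^2}$ with $a(k)$ an absolutely convergent Euler product over monic irreducibles and $g_{\text{U}}(k)=\prod_{j=0}^{k-1} j!/(j+k)!$.

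\emph{Route 1 (large finite field).} Using the functional equation \eqref{fe}, Proposition \ref{entin_lemma} and the Riemann hypothesis for $\mathcal{L}(u,\chi_f)$, one has $\mathcal{L}(q^{-1/2},f,\psi) = \prod_{j=1}^{N}(1-\rho_j) = \det(I-\Theta_f)$, where $N := d-1$ and $\Theta_f \in U(N)$ is the unitarised Frobenius conjugacy class with eigenvalues $\rho_j$. First I would invoke Katz's determination of the geometric monodromy group of the polynomial Artin--Schreier family as the full general linear group in the relevant representation, so that after unitary normalisation the arithmetic monodromy is all of $U(N)$. By Deligne's equidistribution theorem (see \cite{katz, KS}), as $q \to \infty$ with $N$ fixed the classes $\Theta_f$ become Haar-equidistributed in $U(N)^{\sharp}$ with error $O_N(q^{-1/2})$; since $A \mapsto |\det(I-A)|^{2k}$ is a continuous class function, the family average converges to
\[
M_N(k) := \int_{U(N)} |\det(I-A)|^{2k}\, dA = \prod_{j=1}^{N} \frac{\Gamma(j)\,\Gamma(j+2k)}{\Gamma(j+k)^2},
\]
the last identity being the Keating--Snaith evaluation (\cite{Keating-Snaith}, via the Cauchy identity for Schur functions or the Selberg integral), which satisfies $M_N(k) \sim g_{\text{U}}(k)\, N^{k^2}$ as $N\to\infty$. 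Finally I would re-introduce the finite-field correction: the exact finite-$q$ average differs from this pure random matrix model by an Euler product over $P$, computed by averaging the Euler product of $\mathcal{L}(u,f,\psi)$ over the family exactly as for the main term of Theorem \ref{polyn_k}; this produces $a(k)$, and since $N \asymp \log X$ one recovers the predicted asymptotic.

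\emph{Route 2 (fixed finite field).} When $q$ is fixed, equidistribution is unavailable and I would instead extend the explicit method of this paper. For $k=1$ the conjecture is already Theorem \ref{thm_absvalue}, whose main term $\tfrac{(1-q^{1-p})d}{(1-q^{1-p/2})^2}+\cdots$ is linear in $d$, matching $g_{\text{U}}(1)=1$ and $k^2=1$, with $(1-q^{1-p})/(1-q^{1-p/2})^2$ playing the role of $a(1)/(\log q)$. For general $k$ I would apply an approximate functional equation of the type of Lemma \ref{afe1} to write $|\mathcal{L}(q^{-1/2},f,\psi)|^{2k}$ as a sum over monic $F,H$ with $\deg(FH) \leq k(d-1)$, average over $f \in \as$ by Lemma \ref{lemma} --- so that only the $p$-th power part of $F/H$ survives --- and evaluate the resulting multiple contour integral by the residue calculus of Section \ref{section:proof2}. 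The output should be a polynomial in $d$ of degree $k^2$ whose leading coefficient is precisely $a(k)\, g_{\text{U}}(k)/(\log q)^{k^2}$, yielding the conjecture with a power-saving error term.

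\emph{Main obstacle.} In Route 2 the essential difficulty --- already flagged in the remark after Theorem \ref{polyn_k} --- is uniformity in $k$: Lemma \ref{lemma} (descending from \cite[Lemma 9.1]{BDFLS12}) only controls $\langle \psi_f(F/H)\rangle_d$ when $\deg(FH)$ lies below roughly $d$, whereas the $2k$-th moment requires lengths up to $k(d-1)$. Handling the off-diagonal contribution for $k\geq 2$ would demand either a longer-range analogue of Lemma \ref{lemma}, or a Sawin-type short-interval estimate for $d_{2k}$ with the absolute value built in; for $q=p$ this confines $k$ to a bounded range, so the conjecture for large $k$ remains open even in this most favourable function-field setting. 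In Route 1 the analogous obstruction is the interchange of the limits $q\to\infty$ and $N\to\infty$, since Deligne's error grows with $N$; removing it needs quantitative equidistribution uniform in $N$, which is not presently known.
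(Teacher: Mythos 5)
The statement you are addressing is Conjecture \ref{unitary}: it is the Keating--Snaith moment conjecture, which the paper only \emph{states} as a random matrix theory prediction and does not prove. The paper's sole contact with it is the consistency check in the final section: Theorem \ref{thm_absvalue} confirms the $k=1$ instance for the polynomial Artin--Schreier family (a main term linear in $d$, with leading coefficient matching $g_{\text{U}}(1)=1$ after stripping off the $q$-dependent arithmetic factor). Your proposal is likewise not a proof --- you concede as much in your closing paragraph --- so there is no complete argument here to compare with anything; what you have written is a survey of two strategies, each of which stalls at a known and essential obstruction.

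Concretely: in Route 1, Deligne--Katz--Sarnak equidistribution gives the $q\to\infty$ limit of the moment as the $U(N)$ average for each \emph{fixed} $N=d-1$, but the conjecture is a $\log X\to\infty$ statement at fixed conductor-to-field ratio, and the equidistribution error $O_N(q^{-1/2})$ has an implied constant growing with $N$; the interchange of the limits $q\to\infty$ and $N\to\infty$ is precisely the unproven content. Your plan to ``re-introduce the finite-field correction'' as an Euler product computed as in Theorem \ref{polyn_k} also conflates the moments of $\mathcal{L}(q^{-1/2},f,\psi)^k$ (which that theorem treats and which have a constant-size main term) with the moments of $|\mathcal{L}(q^{-1/2},f,\psi)|^{2k}$ (which the conjecture concerns and which should grow like $d^{k^2}$); the factorization of the answer into an arithmetic factor times a random-matrix factor is itself conjectural (it is the CFKRS recipe, not a theorem). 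In Route 2, as you note, Lemma \ref{lemma} controls $\langle\psi_f(P_1)^{h_1}\cdots\psi_f(P_s)^{h_s}\rangle_d$ only when $\deg(P_1)+\cdots+\deg(P_s)\le d$, whereas the approximate functional equation for the $2k$-th absolute moment produces sums with $\deg(FH)$ up to $k(d-1)$; for $k\ge 2$ the off-diagonal ranges are uncontrolled, and no analogue of Proposition \ref{sawin_prop} is available here because the absolute-value moment does not reduce to divisor sums in progressions modulo $x^{d+1}$. Neither route closes, so the honest statement is that only $k=1$ is established --- which is exactly what the paper claims, by presenting this as a conjecture and proving Theorem \ref{thm_absvalue} as evidence for it.
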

In particular, in the case of the second moment ($k=1$), we have $g_{\text{U}}(1)=1$. 

Note that Conjecture \ref{unitary} above is stated for a family of $L$-functions over number fields, but a similar Conjecture can be stated in the function field setting. Namely, for a family $\mathcal{F}$ of $L$-functions over function fields with expected unitary symmetry, one would expect
$$ \frac{1}{D^{*}} \sum_{\substack{f \in \mathcal{F} \\ \log_q c(f) = d}} |L_f(1/2)|^{2k} \sim a(k) g_{\text{U}}(k) d^{k^2},$$ where 
$$D^{*} = \Big| \{ f \in \mathcal{F} \,:\, \log_q |c(f)| =d \} \Big|.$$
Note that the leading order term in Theorem \ref{thm_absvalue}  (apart from the arithmetic factor which depends on $q$ and which corresponds to the factor $a(1)$ in Conjecture \ref{unitary}) matches the conjecture above (it is equal to $1$), under the correspondence:
\begin{equation}\label{eq:correspondence}
\mathcal{F} = \mathcal{AS}_d^0, \,  L_f = \mathcal{L}(u,f,\psi), \,  \log_q |c(f)| = d.
\end{equation}
(Technically, the degree of the conductor is $d+1$, but this leads to the same asymptotic.)

It is also possible to extract a conjecture for moments without absolute value from Keating and Snaith's work \cite{Keating-Snaith} by following the computation in Section 2 of \cite{DLN}. This leads to
$$ \frac{1}{X^{*}} \sum_{\substack{f \in \mathcal{F} \\ c(f) \leq X}}L_f(1/2)^{k} \sim a(k,0),$$
where, as before, $a(k,0)$ is an arithmetic factor. Translating as before, this leads to
$$ \frac{1}{D^{*}} \sum_{\substack{f \in \mathcal{F} \\ \log_q c(f) = d}} L_f(1/2)^{k} \sim a(k,0).$$
Once again, we recover the result of Theorem \ref{polyn_k} under the correspondence \eqref{eq:correspondence}.

Hence the polynomial family is expected to have unitary symmetry, as suggested both by Theorems \ref{polyn_k} and \ref{thm_absvalue} and by the local statistics results due to Entin and Pirani \cite{Ent, EntPir}.

\bibliographystyle{amsalpha}
\bibliography{Bibliography}
\end{document}